\documentclass[pdflatex,sn-mathphys-num,leqno]{sn-jnl} 
\usepackage{tikz}
\usetikzlibrary{trees, arrows.meta, positioning}
\usepackage{caption}
\usepackage{subcaption}

\usepackage{graphicx}%
\usepackage{multirow}%
\usepackage{amsmath,amssymb,amsfonts}%
\usepackage{amsthm}%
\usepackage{mathrsfs}%
\usepackage[title]{appendix}%
\usepackage{xcolor}%
\usepackage{textcomp}%
\usepackage{manyfoot}%
\usepackage{booktabs}%
\usepackage{algorithm}%
\usepackage{algorithmicx}%
\usepackage{algpseudocode}%
\usepackage{listings}%

\usepackage{bm}
\usepackage{mathdots}
\usepackage{pifont}
\newcommand{\cmark}{\ding{51}}%
\newcommand{\xmark}{\ding{55}}%
\usepackage{pdflscape}

\DeclareMathOperator*{\argmin}{argmin}

\DeclareMathOperator{\conv}{conv}
\newcommand{\R}{\mathbb{R}}
\newcommand{\vol}[2]{\mathrm{vol}_{\scriptscriptstyle #1}(#2)}

\makeatletter
\renewcommand*{\top}{%
  {\mathpalette\@transpose{}}%
}
\newcommand*{\@transpose}[2]{%
  \raisebox{\depth}{$\m@th#1\scriptscriptstyle\mathsf{T}$}%
}
\makeatother

\theoremstyle{thmstyleone}%
\newtheorem{theorem}{Theorem}%

\theoremstyle{thmstyletwo}%

\theoremstyle{thmstylethree}%

\begin{document}

\title[The convex hull of a simple monomial]{On the convex hull of\break the graph of a simple monomial\footnote{This manuscript is an extended version of the very short conference paper \cite{LSS_STOGO}}}

\author[1]{\fnm{Jon} \sur{Lee}}\email{jonxlee@umich.edu}

\author[2]{\fnm{Daphne} \sur{Skipper}}\email{skipper@usna.edu}

\author[3]{\fnm{Emily} \sur{Speakman}}\email{emily.speakman@ucdenver.edu}

\affil[1]{\orgdiv{IOE Department}, \orgname{University of Michigan}, \orgaddress{\city{Ann Arbor}}}

\affil[2]{\orgdiv{Mathematics Department}, \orgname{United States Naval Academy}, \orgaddress{\city{Annapolis}}}

\affil[3]{\orgdiv{Mathematical and Statistical Sciences Department}, \orgname{University of Colorado}, \orgaddress{\city{Denver}}}

\abstract{Motivated by previous efforts toward mathematically analyzing the treatment of
monomials in spatial branch-and-bound, we study the convex hull of the graph of
a simple monomial on a nonnegative box domain in arbitrary dimension, 
where at most one of the variable lower bounds is positive. 
We give: (i) a description via linear inequalities, and (ii) a formula for the
volume.}

\keywords{polytope, volume, global optimization, mixed-integer non-linear optimization, monomial, spatial branch-and-bound}

\maketitle

\section{Introduction}\label{sec:intro}

A common approach for global optimization 
is spatial branch-and-bound (sB\&B), see e.g. \cite{SMITH1999457}, which is the back-bone algorithm 
in several successful packages, for example Baron\footnote{\url{https://www.minlp.com/baron--solver}}, SCIP\footnote{\url{https://www.scipopt.org/}}, and COUENNE\footnote{\url{https://github.com/coin-or/Couenne}}.
These packages aim specifically at factorable formulations, where each function is a low-depth composition of library functions of a small number of 
variables (or of very-special structure in many variables) and affine functions (in an arbitrary number of variables). It is assumed that with each variable 
we have an interval domain, and that for each library function, we have a practical 
convexification method for the graph 
of that function on box subdomains of the function domain. 
In practice, we may have good convexification methods
for only some box subdomains for a given
library function. 

There is great modeling interest and power in 
polynomials, which at their heart, from the point of 
view of sB\&B boil down to monomials. 
Noting that we can decompose monomials into lower degree
monomials, and considering that we lose something
in the convexifications as we decompose, 
it behooves us to increase the 
set of library monomials 
and associated box subdomains at our disposal,
and to understand the quality of the convexifications that 
we employ.

Toward this, in what follows, we study 
the convexification of graphs of simple monomials 
on special nonnegative box subdomains.
We aim at getting explicit convex-hull descriptions 
with minimal inequality systems, and 
we aim at calculating the volume (i.e., Lebesgue measure)
of these convex hulls, toward future work on
evaluating the quality of relaxations of these convex hulls.

We assume some familiarity with polytopes (see \cite{zieg}, for example).  Briefly, $\conv(\cdot)$ denotes convex hull, and
$\vol{d}{B}$ denotes the $d$-dimensional volume (i.e., Lebesgue measure) of a convex body $B\subseteq \R^d$.

For $n\geq 2$ and  $\bm{a},\bm{b}\in\R^n$, with
$0\leq a_i <b_i$\,, $i=1,2,\ldots,n$, let
\[ \textstyle
C_n(\bm{a},\bm{b}):=\conv \left (\left\{\begin{pmatrix} \bm{x} \\ y \end{pmatrix} \in \R^{n+1} ~:~
\bm{a} \leq \bm{x} \leq \bm{b},~ y := \prod_{i=1}^n x_i\right\}\right ).
\]
It is well known that this fundamental object, the convex hull of the graph of the ($n$-linear) monomial
$\prod_{i=1}^n x_i$ on the box domain $[\bm{a},\bm{b}]$,
is a polytope having $2^n$ extreme points, each of which has the form
$\left(
\bm{x}^\top, y \right)^\top$, where each of the $n$ components of $\bm{x}$ is set to either its lower or upper bound, and $y:=x_1x_2 \dots x_n$\,; see \cite{rikun1997}.
For conciseness,  we let $C_n^k$ denote $C_n(\bm{a},\bm{b})$ in the case of $a_i:=0$, for $i=1,2,\ldots,n-k$.  That is, we allow at most $k$ variables (arbitrarily $x_{n-k+1}, \ldots, x_n$) to have a non-zero lower bound. 

The case of $k=0$ is quite simple, see e.g. \cite[Sec. 5.1]{LeeSkipperSpeakmanMPB2018}.
In that case it is not hard to see that 
$C_n^0$ is a pyramid with bases that is the intersection
of $C_n^0$ with the hyperplane described by $y=0$.
The extreme points of the base are the $2^n-1$ points
of the form $(\bm{x}^\top,y)^\top$, where $\bm{x}\in\mathbb{R}^n$
has \emph{fewer} than $n$ components at their upper bounds 
and the remaining at 0. The base is the solution set of
\begin{align*}
& 0\leq x_i \leq b_i \,,\quad i=1,\ldots,n;\\
& \sum_{i=1}^n \left( \prod_{j\not= i} b_j\right) x_i \leq (n-1)\prod_{i=1}^n b_i\,,
\end{align*}
the last inequality of which cuts off $(b_1,\ldots,b_n)^\top$
at the adjacent $n$ vertices of the box described 
by the other inequalities. To see now what 
the facets of $C_n^0$ look like (not explicitly done in \cite{LeeSkipper2017}), we just need to
``lift'' the variable $y\geq 0$ into these inequalities,
so that the resulting inequality is satisfied as an equation
by the pyramid apex $(b_1\,,\ldots,b_n\,,\prod_{i=1}^n b_i)^\top$. It is easy to check that the last inequality 
lifts to 
\begin{align*}
& -y + \sum_{i=1}^n \left( \prod_{j\not= i} b_j\right) x_i \leq (n-1)\prod_{i=1}^n b_i\,.
\end{align*}
The inequalities $x_i\geq 0$ ($i=1,\ldots,n$)
lift to 
\[
-\frac{1}{\prod_{j\not=i} b_i}y + x_i \geq 0, \quad i=1,\ldots,n,
\]
and the lifting coefficient of $y$ for the
$x_i \leq b_i$ inequalities are all 0  ($i=1,\ldots,n$).

Using the well-known volume formula for a pyramid,
\cite[Thm. 23]{LeeSkipperSpeakmanMPB2018} obtained
\[
\vol{n+1}{C^0_n}=\frac{\prod_{i=1}^n b_i^2}{(n+1)!}(n!-1).
\]

In much of what follows, we concentrate on 
the significantly more challenging case of $C_n^1$\,, which has at most one variable (arbitrarily $x_n$) having a nonzero lower bound. This significantly generalizes what we just reviewed for 
the pyramid $C_n^0$\,.
In Section \ref{sec:hull}, we establish a complete linear-inequality description of 
$C_n^1$\,, for all $n$. In Section \ref{sec:vol},
we establish a formula for the volume of 
$C_n^1$\,, for all $n$.

\medskip
\noindent
{\bf Some closely related literature.} 
It is very easy to check that for the bilinear case, 
 $C_2^2$ is the 
  tetrahedron
 described by the well-known McCormick inequalities: 
\begin{align*}
    y-a_2x_1-a_1x_2+a_1 a_2 \geq 0;&\\
    -y+b_2x_1+a_1x_2-a_1b_2 \geq 0;&\\
    -y+a_2x_1+b_1x_2-b_1a_2 \geq 0;&\\
    y-b_2x_1-b_1 x_2 +b_1b_2 \geq 0,&
\end{align*}
with volume
\[
\vol{3}{C^2_2}=\frac{1}{6}(b_1-a_1)^2(b_2-a_2)^2;
\]
see \cite{McCormick76} and \cite[page 133]{LeeSkipperSpeakmanMPB2018}. 
The quite complicated trilinear case of $C_3^3$ was studied extensively. \cite{meyer2004trilinear, meyer04mixed} worked out the
facet-describing inequalities, even for the 
case of mixed-sign (box) domains.
\cite{SpeakmanThesis,SpeakmanLee2015}
found the volume formula
(and, for various natural polyhedral relaxations of $C_3^3$\,, they worked out 
the
facet-describing inequalities and volume formulae). \cite{SpeakAkerov2019}
devised a completely different proof of the volume formula for $C_3^3$ using ``mixed volumes''. 
\cite{MakhoulSpeakman2025} generalized the 
volume formula for $C_3^3$ to the 
case of mixed-sign (box) domains.
Additional results concerning $C_3^3$\,, 
motivated more directly by its use in spatial branch-and-bound are in 
\cite{SpeakmanLee_Branching} and \cite{SpeakmanYuLee}.
\cite{quad}, motivated
by problems in molecular distance geometry
and the Hartree–Fock problem in physics,
made a mostly-computational study,
comparing relaxations of $C_4^4$\,, considering even mixed-sign domains.

\medskip
\noindent
{\bf Broader background literature on volumes.} 
\cite{LM1994} introduced the idea of comparing
polytopes arising in global and combinatorial optimization
by obtaining and analyzing volume formulae.
Further work in this direction includes
\cite{KLS1197} and \cite{Steingrimsson1994}.
A fairly recent and broader survey  is \cite{LeeSkipperSpeakmanMPB2018}, which includes further 
references.

\medskip
\noindent
{\bf Standard terminology for polytopes.} 
The inequality $\alpha^\top  x \leq \beta$ is 
\emph{valid} for a polytope $\mathcal{P}\subseteq\mathbb{R}^d$ if it is satisfied by
all points of $\mathcal{P}$. 
The \emph{face described} by
a valid inequality $\alpha^\top  x \leq \beta$  of $\mathcal{P}$ 
is $\mathcal{P}\cap \{ x \in \mathbb{R}^d ~:~ \alpha^\top x = \beta\}$.
The \emph{dimension} of $\mathcal{P}$, $\dim(\mathcal{P})$, is the maximum number of affinely-independent points in $\mathcal{P}$ minus one.
The polytope $\mathcal{P}\subseteq\mathbb{R}^d$ is
\emph{full dimensional} if $\dim(\mathcal{P})=d$.
A face $\mathcal{F}$ of polytope $\mathcal{P}$ is a
\emph{facet} of $\mathcal{P}$ 
if $\dim(\mathcal{F})=\dim(\mathcal{P})-1$.
If polytope $\mathcal{P}$ is
full-dimensional, then, up to positive scaling, every
facet of $\mathcal{P}$ is described by a unique 
inequality, and all such inequalities 
yield a minimal inequality description of $\mathcal{P}$.

\medskip
\noindent
{\bf Additional notation.} 
For a positive integer $n$, we let $N:=\{1,\ldots,n\}$.
For a list of (unique) integers $X$ 
from a set of integers $S$, we let $S_X$ denote
the set of integers in $S$ that are not on $X$. For example, for $i\in N$,  $N_{in}=\{1,\ldots,i-1,i+1,\ldots,n-1\}$. For any finite set $S$ and element $e$ not in (resp., in) $S$, we let $S+e:=S\cup\{e\}$ (resp., $S-e:=S\setminus\{e\})$.


\section{Convex hull}\label{sec:hull}

In this section, we establish a complete linear-inequality description of 
$C_n^1$\,, for all $n$. While our methodology for doing so is well known, carrying 
it out is quite complicated, due to degeneracy. 

\begin{theorem}\label{thm:facets}
For all $n\geq 2$,  
$C_n^1$ is completely described by the system
\begin{align*}
y -   \sum_{i\in N_n} \left ( a_n \prod_{j\in N_{in}}b_j \right ) x_i + (n-2)a_n \prod_{j\in N_n}b_j &\geq 0; &\\
y - \sum_{i\in N} \left (\prod_{j\in N_i}b_j \right ) x_i + (n-1) \prod_{j\in N}b_j &\geq 0; &\\
-y + \left ( a_n \prod_{j\in N_{in}}b_j \right ) x_i + \left (\prod_{j\in N_n} b_j \right ) x_n - a_n \prod_{j\in N_n}b_j &\geq 0, &i\in N_n\,;\\
-y + \left (\prod_{j\in N_{i}} b_j \right )x_i &\geq 0, &i\in N_n\,;\\
y &\geq 0; & \\
x_i &\leq b_i\,, &i\in N; \\
x_n &\geq a_n\,. & 
\end{align*}
\end{theorem}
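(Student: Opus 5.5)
We prove Theorem~\ref{thm:facets} by the standard route for polytopes given by a finite vertex list. We show that each listed inequality is valid for $C_n^1$, and then that the polyhedron $Q$ they define has no vertex outside the $2^n$ extreme points of $C_n^1$ identified in \cite{rikun1997}.

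\textbf{Step 1: vertices and validity.} The vertices of $C_n^1$ are $v_S:=(\bm{x}^S,y^S)$ for $S\subseteq N$. Here $x_i=b_i$ for $i\in S$. For $i\notin S$ we set $x_i=0$ when $i<n$ and $x_n=a_n$ when $n\notin S$. Finally $y^S=\prod_i x_i$, which is $0$ unless $N_n\subseteq S$. Validity is a check at the $2^n$ vertices. For the bounds and $y\ge 0$ it is immediate. For $-y+(\prod_{j\in N_i}b_j)x_i\ge0$: if $i\notin S$ both terms vanish, and otherwise the left side is $\prod_{j\in N_i}b_j\,(b_i-x_i)\cdots$, which reduces to $\prod_{j\neq i} b_j - \prod_{j\neq i}x_j\ge0$ after factoring $b_i$.

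For the two ``lower'' inequalities, let $m:=|N_n\setminus S|$. The first inequality evaluates to $y+a_n\prod_{j\in N_n}b_j\,(m-1)$, which is $\ge0$ and vanishes exactly when $m=1$, or when $m=0$ and $y=a_n\prod b_j$. The second inequality has the analogous count with $x_n$ included. The third family is checked in the same way by splitting on $i\in S$ and $n\in S$. Recording the tight vertices for each inequality at this stage is useful later.

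\textbf{Step 2: $Q\subseteq C_n^1$.} We plan to show that every vertex of $Q$ is some $v_S$; it is cleanest to do this via a $y$-slicing/projection argument. First, $Q$ is bounded: the constraints give $x_i\le b_i$, $x_n\ge a_n$, and $y\ge0$. The constraint $-y+(\cdot)x_i\ge0$ with $y\ge0$ forces $x_i\ge0$, and $y$ is bounded above by the same inequalities. Hence it suffices to compare the extreme points.

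Take a vertex $(\bm{x},y)$ of $Q$, with $n+1$ linearly independent tight constraints. We split into two cases.

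\emph{Case $y=0$.} The system reduces to $0\le x_i\le b_i$, $a_n\le x_n\le b_n$, together with the two aggregate inequalities, which become knapsack-type cuts removing the corner $(b_{N_n},\cdot)$. This is the box-with-corner-cut structure of $C^0$ reviewed in the introduction, now in the $C_{n-1}^0$ shape lifted by $x_n\in[a_n,b_n]$. We show its vertices have $x_i\in\{0,b_i\}$ for $i<n$, $x_n\in\{a_n,b_n\}$, and not all $x_i=b_i$ for $i<n$, so they are exactly the $v_S$ with $y^S=0$.

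\emph{Case $y>0$.} Some upper inequality on $y$ must be tight. We argue that this forces $x_i=b_i$ for all $i<n$ and that $(x_n,y)$ lies on the segment between $(a_n,a_n\prod b)$ and $(b_n,\prod b)$, whose endpoints are vertices. The mechanism is a short case analysis on which of the third or fourth families is tight, using the lower inequalities to pin the remaining coordinates.

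\textbf{Alternative and main obstacle.} As an alternative to Step 2, one can show that each inequality is facet-defining by exhibiting $n+1$ affinely independent tight vertices from Step 1. One would then show that every facet of $C_n^1$ appears, by walking adjacency: every facet of a polytope with these vertices is determined by a tight vertex set, and one enumerates the possible maximal tight sets.

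The hard part is degeneracy. Many vertices (all $v_S$ with $m=1$ or with $y=0$) lie on several facets simultaneously, so vertex enumeration of $Q$ involves many ties. I expect the $y>0$ case analysis, which shows that no fractional vertex arises when a third-family inequality and the first lower inequality are simultaneously tight, to be the main obstacle. My first attempt there would be to eliminate $y$ using the tight upper inequality and reduce to the $n=2$ McCormick-type picture in the $(x_i,x_n)$ plane.
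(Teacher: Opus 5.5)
\textbf{Verdict: genuine gap.} The completeness direction for points with $y>0$ is asserted, not proved. Your validity check is fine, and so is the reduction in Step~2: $Q$ is a polytope containing $C_n^1$, so it suffices that every vertex of $Q$ is some $v_S$. This vertex-enumeration route is a legitimate alternative to the paper's. The case $y=0$ also goes through: on $\{y=0\}$ the second inequality and the third family are implied by the others, so that face is the prism over the corner-cut box $\{0\le x_i\le b_i,\ \sum_{i<n}x_i/b_i\le n-2\}$ from the introduction.

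The gap is the case $y>0$, which carries essentially all of the content of completeness. ``Some upper inequality on $y$ must be tight'' and ``this forces $x_i=b_i$ for all $i<n$'' are asserted without argument, and you leave the degenerate configuration explicitly unresolved. As written, the proposal establishes only validity.

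The mechanism you sketch also points the wrong way. Tightness of an upper inequality cannot force $x_i=b_i$. Let $\Pi=\prod_{j<n}b_j$ and take the segment joining the vertex with $x_1=0$, $x_j=b_j$ ($1<j<n$), $x_n=a_n$, $y=0$ to $(b_1,\dots,b_{n-1},a_n,a_n\Pi)$. On it, the first inequality and the third inequality for $i=1$ are both tight, yet $0<x_1<b_1$ in its relative interior. Such points fail to be vertices because the tight system is rank-deficient, not because lower inequalities pin coordinates.

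\textbf{What would close the gap.} Put $z_i:=x_i/b_i$ for $i<n$ and $w:=y/\Pi$, and use $\sum_{j<n}z_j-(n-2)=z_i-\sum_{j\ne i}(1-z_j)\le z_i$. This yields four facts:
\begin{itemize}
\item the first inequality and any fourth-family inequality are never both tight when $w>0$;
\item the second inequality and any third-family inequality are never both tight unless $z=\mathbf 1$;
\item the first inequality and the third for index $i$ are both tight only if $z_j=1$ for all $j\ne i$ and $x_n=a_n$;
\item the second inequality and the fourth for index $i$ are both tight only if $z_j=1$ for all $j\ne i$ and $x_n=b_n$.
\end{itemize}
In the last two situations, the two tight inequalities restrict to the same equation $w-a_nz_i=\mathrm{const}$ (resp.\ $w-b_nz_i=\mathrm{const}$) in the free coordinates $(z_i,w)$. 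So the tight system has rank only $n$, and $\pm(e_{z_i}+a_ne_w)$ (resp.\ $\pm(e_{z_i}+b_ne_w)$) is a feasible two-sided direction.

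When only upper, or only lower, inequalities in $w$ are tight, you still need explicit two-sided perturbations:
\begin{itemize}
\item upper-only case: move a fractional $z_j$ above the minimum; failing that, move all minimizing coordinates together with $w$ and, if needed, $x_n$;
\item lower-only case: move two fractional coordinates in opposite directions; failing that, move the single fractional one together with $w$ and, if needed, $x_n$.
\end{itemize}
When $x_n$ must move, it is strictly between $a_n$ and $b_n$. Only after all this do you get $z=\mathbf 1$, hence $w=x_n\in\{a_n,b_n\}$ at a vertex.

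\textbf{Comparison with the paper.} The paper avoids this degeneracy analysis altogether. It fixes an arbitrary objective and picks the best of four candidate extreme points of $C_n^1$. In each of nine cases it writes down a dual-feasible solution of equal value, so it never has to classify the vertices of the system. Your fallback of proving each listed inequality facet-defining would not close the gap either, since that says nothing about whether further facets exist.
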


\newpage

\begin{proof}~
\paragraph{Outline} We first verify that every inequality in the system is valid for $C^1_n$\,.
We then formulate a linear program~(P) that maximizes an arbitrary linear function
$c_0 y + \sum_{i \in N} c_i x_i$ over the inequality system, and give an algorithm
that identifies an optimal primal solution among the extreme points of~$C^1_n$\,.
For each possible outcome of this algorithm, we construct an explicit dual feasible
solution whose objective value matches the primal objective, establishing optimality
by strong duality. Because validity ensures that every extreme point of~$C^1_n$ is
feasible for~(P), and optimality demonstrates that no feasible point of~(P) achieves a
higher objective value than some extreme point of~$C^1_n$\,, we may conclude that the feasible region
of~(P) is exactly~$C^1_n$\,.


\paragraph{System Validity}

To establish that the inequality system in Theorem~\ref{thm:facets} is valid for $C^1_n$\,, we evaluate each inequality at every extreme point of $C^1_n$\,.  For convenience, we first define
$$\Pi  := \prod_{i \in N_n} b_i\,,$$ noting that $\Pi > 0$ because by assumption $b_1, \dots ,b_{n-1} >0$.  

At each extreme point of~$C^1_n$\,, every $x_i$ ($i \in N_n$) is set to either~$0$ or~$b_i$\,, and $x_n \in \{a_n, b_n\}$. Letting $T \subseteq N_n$ denote the set of indices with $x_i = b_i$ (so that $x_i = 0$ for $i \in N_n \setminus T$), we have $y = x_n\Pi$ when $T = N_n$ and $y = 0$ when $T \subsetneq N_n$\,. Table~\ref{tab:validity} records the resulting left-hand side value for each inequality at each extreme point type.  

Most entries in the table are immediately nonnegative given our assumptions on the problem data, with the following two observations completing the verification.  First, when $T \subsetneq N_n$ we have $|T| \leq n-2$, so the factor $(n-2-|T|)$ appearing in several entries is nonnegative.  Second, the entry for the second inequality with $T \subsetneq N_n$ and $x_n = a_n$ satisfies $$(n-1-|T|)\,b_n\Pi - a_n\Pi \geq b_n\Pi - a_n\Pi = (b_n - a_n)\Pi \geq 0.$$

\begin{table}[ht]
\centering
\caption{Left-hand side values of each inequality in the system of Theorem~\ref{thm:facets} evaluated at every extreme point of~$C^1_n$\,.}
\label{tab:validity}
\medskip
\renewcommand{\arraystretch}{1.8}
\begin{tabular}{@{}lcccc@{}}
\toprule
& \multicolumn{2}{c}{$T = N_n$ \;($y = x_n\Pi$)}
& \multicolumn{2}{c}{$T \subsetneq N_n$ \;($y = 0$, \;$|T| \leq n\!-\!2$)} \\
\cmidrule(lr){2-3} \cmidrule(lr){4-5}
Inequality & $x_n = b_n$ & $x_n = a_n$ & $x_n = b_n$ & $x_n = a_n$ \\
\midrule
$\displaystyle y - \sum_{i\in N_n} \frac{a_n}{b_i}\,\Pi\, x_i + (n\!-\!2)\,a_n\Pi \geq 0$
  & $(b_n\!-\!a_n)\Pi$
  & $0$
  & $(n\!-\!2\!-\!|T|)\,a_n\Pi$
  & $(n\!-\!2\!-\!|T|)\,a_n\Pi$ \\[4pt]
$\displaystyle y - \sum_{i\in N_n}\!\frac{b_n}{b_i}\Pi x_i - \Pi x_n+ (n\!-\!1)\,b_n\Pi \geq 0$\rlap{$^*$}
  & $0$
  & $0$
  & $(n\!-\!2\!-\!|T|)\,b_n\Pi$
  & $(n\!-\!1\!-\!|T|)\,b_n\Pi - a_n\Pi$ \\[4pt]
\midrule
\multicolumn{5}{@{}l}{\emph{For each $i \in N_n$\,, with sub-cases
$i \in T$ and $i \notin T$: }} \\[2pt]
$\displaystyle -y + \frac{a_n}{b_i}\,\Pi\, x_i + \Pi\, x_n - a_n\Pi \geq 0$,\; $i \in T$
  & $0$
  & $0$
  & $b_n\Pi$
  & $a_n\Pi$ \\[4pt]
$\displaystyle -y + \frac{a_n}{b_i}\,\Pi\, x_i + \Pi\, x_n - a_n\Pi \geq 0$,\; $i \notin T$
  & ---\rlap{$^{**}$}
  & ---\rlap{$^{**}$}
  & $(b_n\!-\!a_n)\Pi$
  & $0$ \\[4pt]
$\displaystyle -y + \frac{b_n}{b_i}\,\Pi\, x_i \geq 0$,\; $i \in T$
  & $0$
  & $(b_n\!-\!a_n)\Pi$
  & $b_n\Pi$
  & $b_n\Pi$ \\[4pt]
$\displaystyle -y + \frac{b_n}{b_i}\,\Pi\, x_i \geq 0$,\; $i \notin T$
  & ---\rlap{$^{**}$}
  & ---\rlap{$^{**}$}
  & $0$
  & $0$ \\[4pt]
\midrule
$y \geq 0$
  & $b_n\Pi$
  & $a_n\Pi$
  & $0$
  & $0$ \\[4pt]
$x_i \leq b_i$\,, $i \in N$;\quad $x_n \geq a_n$
  & \multicolumn{4}{c}{Satisfied by construction ($x_i \in \{0, b_i\}$ for $i \in N_n$\,;\;
    $x_n \in \{a_n, b_n\}$).} \\
\bottomrule
\end{tabular}

\smallskip
{\footnotesize $^*$\, Written in an expanded form (when compared to the theorem statement) using simplifying notation.}\\
{\footnotesize $^{**}$\,$i \notin T$ cannot occur when $T = N_n$\,.}
\end{table}

\paragraph*{Primal Problem} 

We consider the following linear program with $\bm{c} := [c_0, c_1, \dots, c_n] \in \R^{n+1}$.  The feasible region is exactly the system of inequalities given in the theorem statement (using new notation).
\begin{align*}
\text{Maximize} \quad  \quad &c_0 y + \sum_{i\in N} c_i x_i& &   \tag{P}\label{primal}\\
\text{s.t.}\quad \quad &y -  \sum_{i\in N_n}\frac{a_n}{b_i} \,\Pi\, x_i \geq -(n\!-\!2)\,a_n \Pi\,;& \\
\quad\quad &y -  \sum_{i\in N_n}\frac{b_n}{b_i}\,\Pi\, x_i - \Pi\, x_n \geq -(n\!-\!1)\,b_n \Pi\,;& \\
\quad\quad &-y + \frac{a_n}{b_i}\,\Pi\, x_i + \Pi\, x_n \geq a_n \Pi\,,
&i\in N_n\,;\\
\quad\quad &-y + \frac{b_n}{b_i}\,\Pi\, x_i \geq 0\,,
&i\in N_n\,;\\
\quad\quad &y \geq 0\,;&\\
\quad\quad &x_n \geq a_n\,;&\\
\quad\quad &x_i \leq b_i\,, &i\in N\,.
\end{align*}

\medskip
Let $N_n^+ := \{i \in N_n : c_i \geq 0\}$ and $N_n^- := \{i \in N_n : c_i < 0\}$ partition $N_n$\,, and define
\begin{equation*}
S^+ := \sum_{i \in N_n^+} c_i b_i\,, \quad \text{and}\quad  S^- := \sum_{i \in N_n^-} c_i b_i\,, 
\end{equation*}
noting that $S^+ \geq 0$, $S^- \leq 0$, and $S^+ + S^- = \sum_{i \in N_n} c_i b_i$\,. 
Recall that because $a_i = 0$ for all $i \in N_n$\,, the extreme points of $C^1_n$ have a
simple structure: each $x_i$ ($i \in N_n$) is set to either~$0$ or~$b_i$\,,
$x_n$ is set to either~$a_n$ or~$b_n$\,, and $y = \prod_{i=1}^n x_i$\,.
Setting any $x_i = 0$ for $i\in N_n$ forces $y = 0$, which decouples the
remaining variables.  Therefore, every candidate optimal solution takes
one of two forms:
\begin{itemize}
\item \emph{Full product:}
  set $x_i = b_i$ for all $i \in N_n$\,, leading to $y = x_n \Pi > 0$. The only
  remaining choice is $x_n \in \{a_n, b_n\}$.

\item \emph{Zero product:}
  set at least one $x_i = 0$ for $i \in N_n$\,, forcing $y = 0$.
  Because $y = 0$, the objective reduces to $\sum_{i \in N} c_i x_i$\,,
  so each remaining $x_i$ ($i \in N_n$) is chosen independently:
  $x_i = b_i$ if $c_i \geq 0$, and $x_i = 0$ if $c_i < 0$.
  The only remaining choice is again $x_n \in \{a_n, b_n\}$.
\end{itemize}
Combining these two choices (full vs.\ zero product, and $x_n = a_n$
vs.\ $x_n = b_n$) yields exactly four candidate solutions.
The objective values of these candidates depend on whether
$N_n^- = \emptyset$ or $N_n^- \ne \emptyset$. When $N_n^- = \emptyset$, all $c_i \geq 0$ for $i \in N_n$\,,
so forcing $y = 0$ requires setting some $x_k=0$ even though $c_k \geq 0$, and thus potentially sacrificing a positive contribution to the objective value.  The best choice is to set $$k := \argmin_{j \in N_n^+} c_j b_j\,,$$
at a cost of~$c_kb_k$ (which may be zero if some $c_i=0$).   When $N_n^- \neq \emptyset$, the variables in $N_n^-$ already have $c_i < 0$,
so setting them to zero forces $y = 0$ at no cost, and in fact removes their
negative contribution from the objective.
In each case, the primal algorithm simply evaluates the correct objective value at each of the four candidate solutions and returns the best.

\paragraph{Primal Algorithm}

\medskip\noindent
\textbf{Input:} $a_n > 0$;\; $b \in \mathbb{R}^n$ with $0 < a_n < b_n$
and $b_i > 0$ for $i \in N_n$\,;\; $c_0 \in \mathbb{R}$;\; $c \in \mathbb{R}^n$.

\smallskip\noindent
\textbf{Output:} An extreme point $(\bar{x}, \bar{y})$ of~(P), which we will prove to be optimal.

\begin{itemize}
\item \textbf{Step 1: Classify.} 

Compute $N_n^+$, $N_n^-$, $S^+$, $S^-$, $\Pi$.
\medskip

\item \textbf{Step 2: Evaluate candidates.}

\medskip
\emph{Case A: $N_n^- = \emptyset$:}
\begin{alignat*}{2}
z^{\Pi}_b &:= c_0\, b_n \Pi + c_n b_n + S^+\,, &\qquad &
  \text{(full product, $x_n = b_n$)} \\
z^{\Pi}_a &:= c_0\, a_n \Pi + c_n a_n + S^+\,, &&
  \text{(full product, $x_n = a_n$)} \\
z^0_b &:= c_n b_n + S^+ - c_k b_k\,, &&
  \text{(zero product, $x_n = b_n$)} \\
z^0_a &:= c_n a_n + S^+ - c_k b_k\,. &&
  \text{(zero product, $x_n = a_n$)}
\end{alignat*}

\emph{Case B: $N_n^- \neq \emptyset$:}
\begin{alignat*}{2}
z^{\Pi}_b &:= c_0\, b_n \Pi + c_n b_n + S^+ + S^-\,, &\qquad &
  \text{(full product, $x_n = b_n$)} \\
z^{\Pi}_a &:= c_0\, a_n \Pi + c_n a_n + S^+ + S^-\,, &&
  \text{(full product, $x_n = a_n$)} \\
z^0_b &:= c_n b_n + S^+\,, &&
  \text{(zero product, $x_n = b_n$)} \\
z^0_a &:= c_n a_n + S^+\,. &&
  \text{(zero product, $x_n = a_n$)}
\end{alignat*}

\medskip

\item \textbf{Step 3: Return.}
Let the optimal objective value $z^* := \max\{z^{\Pi}_b,\; z^{\Pi}_a,\;
z^0_b,\; z^0_a\}$, and return the corresponding extreme point:

\medskip
\emph{Case A: $N_n^- = \emptyset$:}

\smallskip\noindent
If $z^* = z^{\Pi}_b$ or $z^* = z^{\Pi}_a$:
set $\bar{x}_i = b_i$ for all $i \in N_n$\,,
$\bar{x}_n = b_n$ or $a_n$ accordingly, and
$\bar{y} = \bar{x}_n \Pi$.

\smallskip\noindent
If $z^* = z^0_b$ or $z^* = z^0_a$\,:
set $\bar{x}_i = b_i$ for $i \in N_{kn}$,
$\bar{x}_k = 0$,
$\bar{x}_n = b_n$ or $a_n$ accordingly, and $\bar{y} = 0$.

\medskip
\emph{Case B: $N_n^- \neq \emptyset$:}

\smallskip\noindent
If $z^* = z^{\Pi}_b$ or $z^* = z^{\Pi}_a$\,:
set $\bar{x}_i = b_i$ for all $i \in N_n$\,,
$\bar{x}_n = b_n$ or $a_n$ accordingly, and
$\bar{y} = \bar{x}_n \Pi$.

\smallskip\noindent
If $z^* = z^0_b$ or $z^* = z^0_a$\,:
set $\bar{x}_i = b_i$ for $i \in N_n^+$
and $\bar{x}_i = 0$ for $i \in N_n^-$,
$\bar{x}_n = b_n$ or $a_n$ accordingly, and $\bar{y} = 0$.

\end{itemize}    

\medskip

In every case, the output $(\bar{x}, \bar{y})$ is an
extreme point of~$C^1_n$\,. Because the inequalities in the system of
Theorem~1 are valid for~$C^1_n$ (established above),
$(\bar{x}, \bar{y})$ is feasible for~(P), with objective
value~$z^*$.  In what follows, we construct a dual feasible solution with objective function value equal to $z^*$ for each possible outcome of the primal algorithm. 
           
\paragraph*{Dual Problem} 

Consider the dual linear program, written so that all dual variables are nonnegative:
\begin{align*}
\text{Minimize} \quad \quad &(n\!-\!2)\,a_n\Pi\, u_1 + (n\!-\!1)\,b_n\Pi\, u_2 - a_n\Pi \!\sum_{i\in N_n}\! v_i - a_n\, s_2 + \sum_{i\in N} b_i\, t_i& &   \tag{D}\label{dual}\\
\text{s.t.}\quad \quad &{-}u_1 - u_2 + \sum_{i\in N_n}v_i + \sum_{i\in N_n}w_i- s_1 = c_0\,;& \tag{$D_y$}\label{ycon}\\
\quad\quad &\frac{a_n}{b_i}\,\Pi\,u_1 + \frac{b_n}{b_i}\,\Pi\,u_2 - \frac{a_n}{b_i}\,\Pi\,v_i - \frac{b_n}{b_i}\,\Pi\,w_i + t_i = c_i\,,
&i\in N_n\,;\tag{$D_{x_i}$}\label{xicons}\\
\quad\quad &\Pi\, u_2 - \Pi\!\sum_{i\in N_n} v_i - s_2 + t_n = c_n\,;& \tag{$D_{x_n}$}\label{xncon}\\
\quad\quad &u_1,\, u_2,\, s_1,\, s_2 \geq 0\,;&\\
\quad\quad &v_i,\, w_i \geq 0\,, &i\in N_n\,;\\
\quad\quad &t_i \geq 0\,, &i\in N\,.
\end{align*}

We have seen that in each case, the primal algorithm produces four candidate objective values and the optimal solution is determined by pairwise comparisons among them. Each such comparison is equivalent to a simple inequality in the problem data.  For example, $z^\Pi_b \geq z^\Pi_a$ if and only if $c_n + c_0\Pi \geq 0$. Figures~\ref{fig:treeA} and~\ref{fig:treeB} organize these comparisons into decision trees whose leaves identify the winning candidate solution, the optimal objective value, and which particular dual certificate we use to establish optimality. In total, we require nine dual certificates: four under Case~B and five under Case~A, where the additional certificate arises because when $z^\Pi_a$ is optimal, different dual constructions are required depending on the sign of~$c_0$\,.

\paragraph*{Dual Certificates}

In what follows, when specifying dual solutions, we (mostly) provide only the variables that are not explicitly set to zero. We also make use of the notation $\gamma^+ := \max\{0,\, \gamma\}$ and $\gamma^- := \max\{0,\, -\gamma\}$ for $\gamma \in \R$, noting that $\gamma^+, \gamma^- \geq 0$ and $\gamma^+ - \gamma^- = \gamma$.

\bigskip
We begin with \textbf{Case A} where $N_n^- = \emptyset$ (i.e., all $c_i \geq 0$ for $i \in N_n$). Note that in this case $S^- = 0$, $S^+ = \sum_{i \in N_n} c_i b_i \geq 0$, and $k := \argmin_{j \in N_n^+} c_j b_j = \argmin_{j \in N_n} c_j b_j$\,.  We present five dual certificates, one for each possible primal optimal objective value with the $z^{\Pi}_a$ case split on the sign of $c_0$\,.

\begin{figure*}[p!]
    \centering
    \begin{subfigure}[t]{\textwidth}
        \centering
        \includegraphics[width=0.82\textwidth]{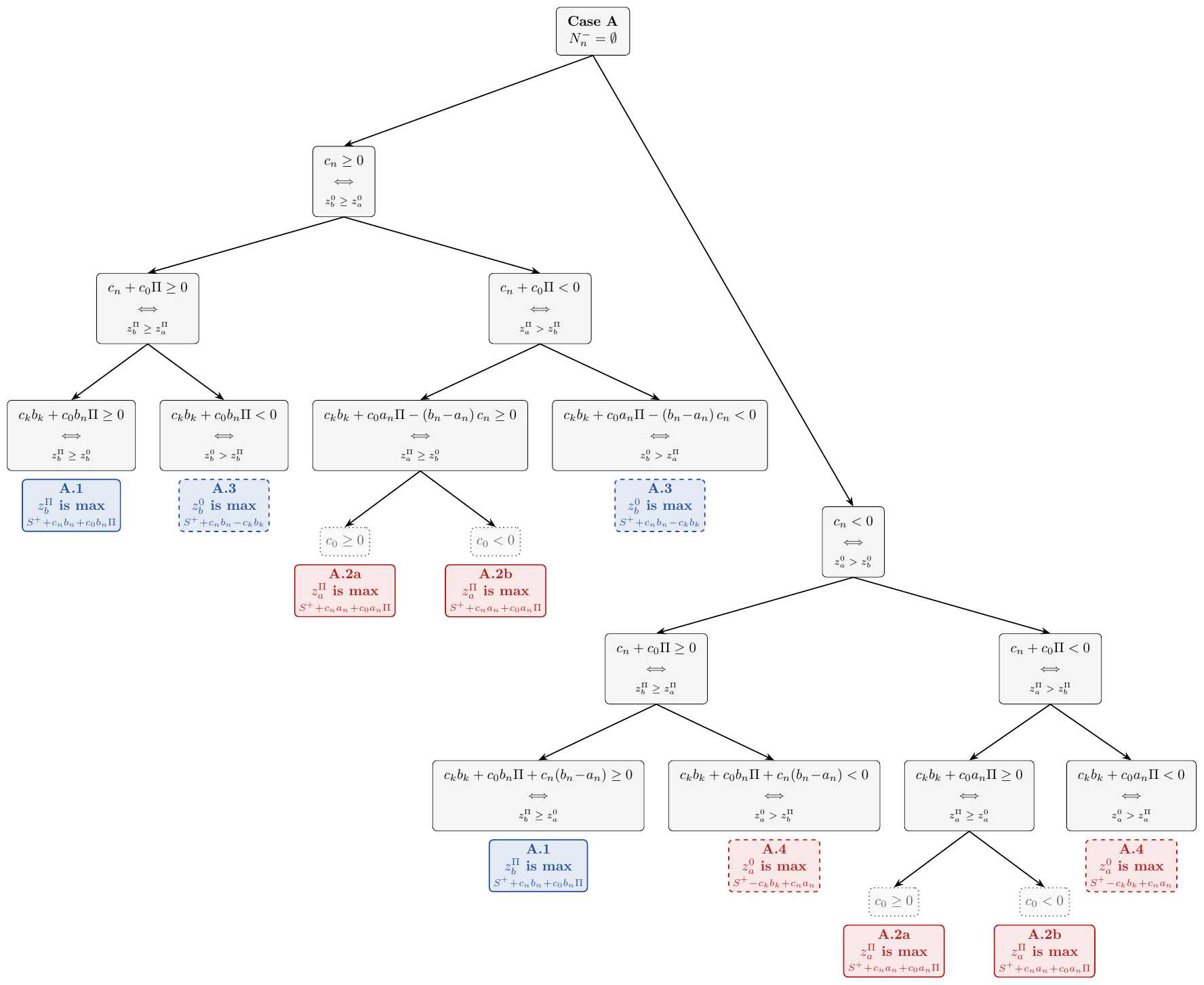}
        \caption{Tree for Case A}
        \label{fig:treeA}
    \end{subfigure}%
    
    \begin{subfigure}[t]{\textwidth}
        \centering
        \includegraphics[width=0.82\textwidth]{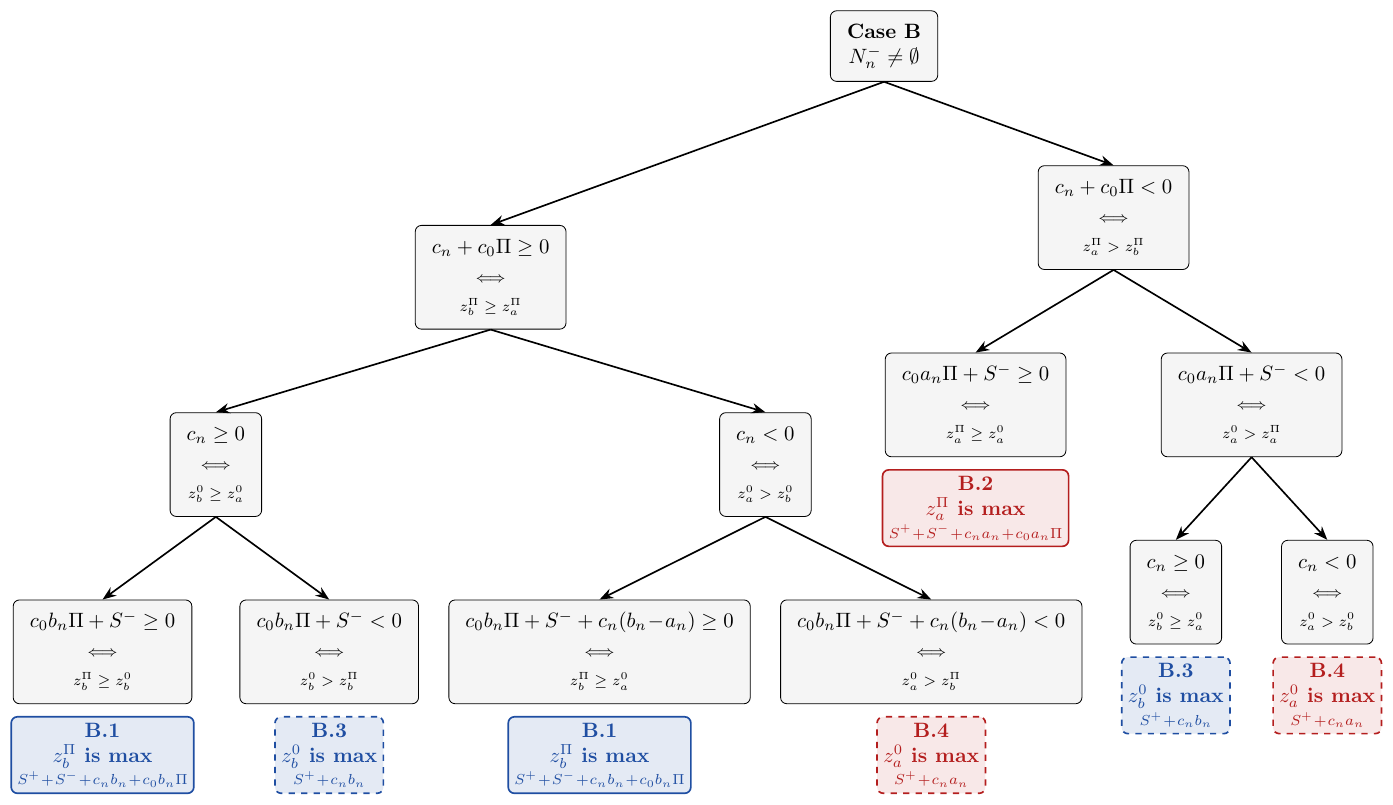}        
        \caption{Tree for Case B}
           \label{fig:treeB}
    \end{subfigure}
    
     \hfill \includegraphics[height=0.5in]{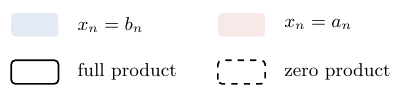}
    \caption{Decision trees for selecting the dual certificate in the proof of Theorem~\ref{thm:facets}. Each internal node tests an inequality that is equivalent to a pairwise comparison between two candidate objective values. Leaves identify the certificate used and the optimal objective value $z^*$. In Case~A, the gray dashed nodes mark the $c_0$ sign split, which determines the dual construction but does not change the primal optimum.}
     \label{fig:casetree}
\end{figure*}

\vspace{5mm}
\textbf{Certificate A.1.}
This certificate applies when $z^{\Pi}_b$ achieves the maximum; therefore we know the primal objective value of the returned solution is $S^+ + c_n b_n + c_0 b_n \Pi$. Moreover, we have
\begin{alignat}{2}
z^{\Pi}_b \geq z^{\Pi}_a &\iff c_n + c_0\Pi \geq 0\,,\label{A1condition1}\\
\text{and}\quad z^{\Pi}_b \geq z^0_b &\iff c_kb_k + c_0 b_n\Pi \geq 0\,.\label{A1condition2}
\end{alignat}
(The third comparison $z^{\Pi}_b \geq z^0_a$ provides no
additional information.)
Now consider the following solution to \eqref{dual} with some fixed $\ell \in N_n$\,:
\begin{itemize}
  \item $u_2 := c_0^-$;
  \item $v_\ell := c_0^+$;
  \item $\displaystyle t_\ell := c_\ell
        + \frac{a_n}{b_\ell}\,\Pi\,c_0^+
        - \frac{b_n}{b_\ell}\,\Pi\,c_0^-$;
  \item $t_n := c_n + c_0 \Pi$;
  \item $\displaystyle t_i := c_i
        - \frac{b_n}{b_i}\,\Pi\,c_0^-$,
        \quad for $i \in N_{\ell n}$\,.
\end{itemize}
\noindent
To demonstrate feasibility, we verify each dual constraint and nonnegativity condition.

\medskip\noindent
\emph{Dual constraints.}
\begin{itemize}
\item ($D_y$):
$$-u_2 + v_\ell = -c_0^- + c_0^+ = c_0\,.$$
\item ($D_{x_i}$) for $i \in N_{\ell n}$:
$$
  \frac{b_n}{b_i}\,\Pi\,u_2 + t_i
  \;=\; \frac{b_n}{b_i}\,\Pi\,c_0^-
        + c_i
        - \frac{b_n}{b_i}\,\Pi\,c_0^-
  \;=\; c_i\,.
$$
\item ($D_{x_\ell}$):
$$
  \frac{b_n}{b_\ell}\,\Pi\,u_2
  - \frac{a_n}{b_\ell}\,\Pi\,v_\ell
  + t_\ell
  \;=\; \frac{b_n}{b_\ell}\,\Pi\,c_0^-
        - \frac{a_n}{b_\ell}\,\Pi\,c_0^+
        + c_\ell
        + \frac{a_n}{b_\ell}\,\Pi\,c_0^+
        - \frac{b_n}{b_\ell}\,\Pi\,c_0^-
  \;=\; c_\ell\,.
$$
\item ($D_{x_n}$):
$$
  \Pi\, u_2 - \Pi\, v_\ell + t_n
  \;=\; \Pi\,c_0^- - \Pi\, c_0^+ + c_n + c_0 \Pi
  \;=\; \Pi\bigl(c_0^- - c_0^+ + c_0\bigr) + c_n
  \;=\; c_n\,.
$$
\end{itemize}

\noindent
\emph{Nonnegativity.}
\begin{itemize}
\item $u_2 = c_0^- \geq 0$;
\item $v_\ell = c_0^+ \geq 0$;
\item $t_n = c_n + c_0 \Pi \geq 0$
      by \eqref{A1condition1};
\item $t_i$ for $i \in N_{\ell n}$\,:

      When $c_0 \geq 0$: $\;t_i = c_i \geq 0$.

      When $c_0 < 0$:
      $$
        t_i = \frac{c_i b_i + c_0 b_n \Pi}{b_i}
        \;\geq\; \frac{c_k b_k + c_0 b_n \Pi}{b_i}
        \;\geq\; 0\,,
      $$
      using $c_i b_i \geq c_k b_k$ by definition of $k$,
      and $c_k b_k + c_0 b_n \Pi \geq 0$ by \eqref{A1condition2};

\item $t_\ell$\,:

      When $c_0 \geq 0$:
      $\;t_\ell = c_\ell + \frac{a_n}{b_\ell}\,\Pi\,c_0^+ \geq 0$.

      When $c_0 < 0$:
      $$
        t_\ell = \frac{c_\ell b_\ell + c_0 b_n \Pi}{b_\ell}
        \;\geq\; 0
      $$
      (same reasoning as above).
\end{itemize}

\smallskip
\noindent
\emph{Objective value.}

Finally, we verify that the dual objective value equals the primal objective $z^* = S^+ + c_n b_n + c_0 b_n \Pi$.
\begin{align*}
  &(n\!-\!1)\,b_n \Pi \cdot u_2 - a_n \Pi \cdot v_\ell
    + b_\ell\, t_\ell
    + \sum_{i \in N_{\ell n}} b_i\, t_i
    + b_n\, t_n \\[4pt]
  &\quad = (n\!-\!1)\,b_n \Pi\,c_0^-
    - a_n \Pi\,c_0^+
    + c_\ell b_\ell + a_n \Pi\,c_0^+ - b_n \Pi\,c_0^- \\
  &\qquad + \sum_{i \in N_{\ell n}}  (c_i b_i
    - b_n\,\Pi\,c_0^-)
    + c_n b_n + c_0 b_n \Pi \\[4pt]
  &\quad = (n\!-\!1)\,b_n \Pi\,c_0^-
    - a_n \Pi\,c_0^+
    + c_\ell b_\ell + a_n \Pi\,c_0^+ - b_n \Pi\,c_0^- \\
  &\qquad + (S^+ - c_\ell b_\ell)
    - (n\!-\!2)\,b_n \Pi\,c_0^-
    + c_n b_n + c_0 b_n \Pi \\[4pt]
  &\quad = S^+ + c_n b_n + c_0 b_n \Pi\,.
\end{align*}
This equals the primal objective, so the primal solution returned by the algorithm in this case: $x_i = b_i$ for all $i \in N$ and $y = b_n \Pi$, is optimal.

\vspace{5mm}
\textbf{Certificate A.2a.}
This certificate applies when $z^{\Pi}_a$ achieves the maximum (and we have $c_0 \geq 0$); therefore we know the primal objective value of the returned solution is $S^+ + c_n a_n + c_0 a_n \Pi$. Moreover, we have
\begin{alignat}{2}
z^{\Pi}_a \geq z^{\Pi}_b &\iff c_n + c_0\Pi \leq 0\,.\label{A2acondition1}
\end{alignat}
(The comparisons $z^{\Pi}_a \geq z^0_b$ and $z^{\Pi}_a \geq z^0_a$ are automatic because $c_0 \geq 0$ and all $c_i \geq 0$.)
Now consider the following solution to \eqref{dual} with some fixed $\ell \in N_n$:
\begin{itemize}
  \item $v_\ell := c_0$\,;
  \item $\displaystyle t_\ell := c_\ell + \frac{c_0 a_n}{b_\ell}\,\Pi$;
  \item $t_i := c_i$\,, \quad for $i \in N_{\ell n}$\,;
  \item $t_n := 0$;
  \item $s_2 := -(c_n + c_0\Pi)$.
\end{itemize}
\noindent
To demonstrate feasibility, we verify each dual constraint and nonnegativity condition.

\medskip\noindent
\emph{Dual constraints.}
\begin{itemize}
\item ($D_y$):
$$v_\ell = c_0.$$
\item ($D_{x_\ell}$):
$$
  -\frac{a_n}{b_\ell}\,\Pi\,v_\ell + t_\ell
  \;=\; -\frac{c_0 a_n}{b_\ell}\,\Pi
        + c_\ell
        + \frac{c_0 a_n}{b_\ell}\,\Pi
  \;=\; c_\ell\,.
$$
\item ($D_{x_i}$) for $i \in N_{\ell n}$\,:
$$t_i \;=\; c_i\,.$$
\item ($D_{x_n}$):
$$
  -\Pi\, v_\ell - s_2 + t_n
  \;=\; -c_0\Pi + (c_n + c_0\Pi) + 0
  \;=\; c_n\,.
$$
\end{itemize}

\noindent
\emph{Nonnegativity.}
\begin{itemize}
\item $v_\ell = c_0 \geq 0$ by assumption;
\item $t_i = c_i \geq 0$ for $i \in N_{\ell n}$\,;
\item $\displaystyle t_\ell = c_\ell + \frac{c_0 a_n}{b_\ell}\,\Pi \geq 0$ (both terms are nonnegative);
\item $s_2 = -(c_n + c_0\Pi) \geq 0$
      by \eqref{A2acondition1}.
\end{itemize}

\smallskip
\noindent
\emph{Objective value.}

Finally, we verify that the dual objective value equals the primal objective $z^* = S^+ + c_n a_n + c_0 a_n \Pi$.
\begin{align*}
  &-a_n \Pi \cdot v_\ell - a_n\, s_2
    + b_\ell\, t_\ell
    + \sum_{i \in N_{\ell n}} b_i\, t_i \\[4pt]
  &\quad = -c_0 a_n \Pi
    + a_n(c_n + c_0\Pi)
    + b_\ell\!\left(c_\ell + \frac{c_0 a_n}{b_\ell}\,\Pi\right)
    + (S^+ - c_\ell b_\ell) \\[4pt]
  &\quad = -c_0 a_n \Pi
    + a_n c_n + a_n c_0\Pi
    + c_\ell b_\ell + c_0 a_n\Pi
    + S^+ - c_\ell b_\ell \\[4pt]
  &\quad = S^+ + c_n a_n + c_0 a_n \Pi\,.
\end{align*}
This equals the primal objective, so the primal solution returned by the algorithm in this case: $x_i = b_i$ for all $i \in N_n$\,, $x_n = a_n$\,, and $y = a_n \Pi$, is optimal.

\vspace{5mm}
\textbf{Certificate A.2b.}
This certificate applies when $z^{\Pi}_a$ achieves the maximum (and we have $c_0 < 0$); therefore we know the primal objective value of the returned solution is $S^+ + c_n a_n + c_0 a_n \Pi$. Moreover, we have
\begin{alignat}{2}
z^{\Pi}_a \geq z^{\Pi}_b &\iff c_n + c_0\Pi \leq 0\,,\label{A2bcondition1}\\
z^{\Pi}_a \geq z^0_a &\iff c_kb_k + c_0 a_n\Pi \geq 0\,,\label{A2bcondition2}\\
\text{and}\quad z^{\Pi}_a \geq z^0_b &\iff c_kb_k + c_0 a_n\Pi - (b_n\!-\!a_n)\,c_n \geq 0\,.\label{A2bcondition3}
\end{alignat}
Of conditions \eqref{A2bcondition2} and \eqref{A2bcondition3}, \eqref{A2bcondition3} is stronger when $c_n \geq 0$ and \eqref{A2bcondition2} is stronger when $c_n < 0$; both are captured by \begin{equation}
\label{A2bcondition4}
c_kb_k + c_0 a_n\Pi - (b_n - a_n)\,c_n^+ \geq 0.
\end{equation}
Now consider the following solution to \eqref{dual}:
\begin{itemize}
  \item $\displaystyle u_1 := -c_0 - \frac{c_n^+}{\Pi}$;
  \item $\displaystyle u_2 := \frac{c_n^+}{\Pi}$;
  \item $\displaystyle t_i := c_i
        - \frac{a_n\Pi\, u_1 + b_n\Pi\, u_2}{b_i} =  \frac{c_i b_i + c_0 a_n \Pi - (b_n - a_n)\,c_n^+}{b_i}$,
        \quad for $i \in N_n$\,;
  \item $t_n := 0$;
  \item $s_2 := c_n^-$\,.
\end{itemize}

\noindent
To demonstrate feasibility, we verify each dual constraint and nonnegativity condition.

\medskip\noindent
\emph{Dual constraints.}
\begin{itemize}
\item ($D_y$):
$$-u_1 - u_2 = c_0 + \frac{c_n^+}{\Pi}-\frac{c_n^+}{\Pi}  = c_0\,.$$
\item ($D_{x_i}$) for $i \in N_n$\,:
$$
  \frac{a_n}{b_i}\,\Pi\,u_1
  + \frac{b_n}{b_i}\,\Pi\,u_2
  + t_i
  \;=\; \frac{a_n\Pi\, u_1 + b_n\Pi\, u_2}{b_i}
        + c_i
        - \frac{a_n\Pi\, u_1 + b_n\Pi\, u_2}{b_i}
  \;=\; c_i\,.
$$
\item ($D_{x_n}$):
$$
  \Pi\, u_2 - s_2 + t_n
  \;=\; c_n^+ - c_n^- + 0
  \;=\; c_n\,.
$$
\end{itemize}

\noindent
\emph{Nonnegativity.}
\begin{itemize}
\item $u_1 = -c_0 - c_n^+/\Pi \geq 0 \;\Longleftrightarrow\; c_n + c_0\Pi \leq 0$, which holds by \eqref{A2bcondition1};
\item $u_2 = c_n^+/\Pi \geq 0$;
\item $s_2 = c_n^- \geq 0$;
\item For $i \in N_n$\,:
$$
        t_i = \frac{c_i b_i + c_0 a_n \Pi - (b_n - a_n)\,c_n^+}{b_i}
        \;\geq\; \frac{c_k b_k + c_0 a_n \Pi - (b_n - a_n)\,c_n^+}{b_i}
        \;\geq\; 0\,,
$$
      because $c_i b_i \geq c_k b_k$ by definition of $k$
      and $c_kb_k + c_0 a_n\Pi - (b_n - a_n)\,c_n^+ \geq 0$ by \eqref{A2bcondition4}.
\end{itemize}

\smallskip
\noindent
\emph{Objective value.}

Finally, we verify that the dual objective value equals the primal objective $z^* = S^+ + c_n a_n + c_0 a_n \Pi$.
\begin{align*}
  &(n\!-\!2)\,a_n\Pi \cdot u_1
    + (n\!-\!1)\,b_n\Pi \cdot u_2
    - a_n\, s_2
    + \sum_{i \in N_n} b_i\, t_i \\[4pt]
  &\quad = (n\!-\!2)\,a_n\Pi\, u_1
    + (n\!-\!1)\,b_n\Pi\, u_2
    - a_n\,c_n^-
    + S^+ - (n\!-\!1)\,(a_n\Pi\, u_1 + b_n\Pi\, u_2) \\[4pt]
  &\quad = -a_n\Pi\, u_1 - a_n\,c_n^- + S^+ \\[4pt]
  &\quad = -a_n\Pi\!\left(-c_0 - \frac{c_n^+}{\Pi}\right) - a_n\,c_n^- + S^+ \\[4pt]
  &\quad = c_0 a_n\Pi + a_n\,c_n^+ - a_n\,c_n^- + S^+ \\[4pt]
  &\quad = c_0 a_n\Pi + a_n\,c_n + S^+\,.
\end{align*}
This equals the primal objective, so the primal solution returned by the algorithm in this case: $x_i = b_i$ for all $i \in N_n$\,, $x_n = a_n$\,, and $y = a_n \Pi$, is optimal.

\vspace{5mm}
\textbf{Certificate A.3.}
This certificate applies when $z^0_b$ achieves the maximum; therefore we know the primal objective value of the returned solution is $S^+ + c_n b_n - c_k b_k$\,. Moreover, we have
\begin{alignat}{2}
z^0_b \geq z^0_a &\iff c_n \geq 0\,,\label{A3condition1}\\
z^0_b \geq z^{\Pi}_b &\iff c_0 b_n\Pi + c_kb_k \leq 0\,,\label{A3condition2}\\
\text{and}\quad z^0_b \geq z^{\Pi}_a &\iff - c_kb_k -c_0 a_n\Pi  + c_n(b_n - a_n) \geq 0\,.\label{A3condition3}
\end{alignat}

Now let
$$u_1 := \frac{(c_kb_k - c_nb_n)^+}{a_n\Pi} \quad \text{and} \quad
  u_2 := \frac{\min\left \{c_n,\; c_kb_k/b_n\right \}}{\Pi}\,.$$
We have $u_1 \geq 0$ by definition, we also have $u_2 \geq 0$ because $c_n \geq 0$ (by \eqref{A3condition1}) and $c_kb_k/b_n \geq 0$.

Consider the expression $$a_n\Pi\, u_1 + b_n\Pi\, u_2,$$  
and note that if $c_kb_k \geq c_nb_n$\,, then 
$$u_1 = (c_kb_k - c_nb_n)/(a_n\Pi) \quad \text{and} \quad u_2 = c_n/\Pi,$$ giving
$$
  a_n\Pi\, u_1 + b_n\Pi\, u_2
  \;=\; (c_kb_k - c_nb_n) + c_nb_n
  \;=\; c_kb_k\,.
$$
Moreover, if $c_kb_k < c_nb_n$\,, then 
$$u_1 = 0 \quad \text{and} \quad u_2 = c_kb_k/(b_n\Pi),$$ 
giving
$$
  a_n\Pi\, u_1 + b_n\Pi\, u_2
  \;=\; 0 + c_kb_k
  \;=\; c_kb_k\,.
$$

Importantly, in both cases, we have 
\begin{equation}
\label{A3keyidentity}
    a_n\Pi\, u_1 + b_n\Pi\, u_2 = c_kb_k\,.
\end{equation}

Now consider the following solution to \eqref{dual}:
\begin{itemize}
  \item $u_1$, $u_2$ as defined above;
  \item $\displaystyle t_i := c_i - \frac{c_kb_k}{b_i}$,
        \quad for $i \in N_{kn}$\,;
  \item $t_k := 0$;
  \item $\displaystyle t_n := \frac{(c_nb_n - c_kb_k)^+}{b_n}$;
  \item $s_1 := -c_0 - u_1 - u_2$\,.
\end{itemize}
\noindent
To demonstrate feasibility, we verify each dual constraint and nonnegativity condition.

\medskip
\noindent
\emph{Dual constraints.}
\begin{itemize}
\item ($D_y$):
$$-u_1 - u_2 - s_1 = -u_1 - u_2 - (-c_0 - u_1 - u_2) = c_0\,.$$
\item ($D_{x_k}$): using \eqref{A3keyidentity},
$$
  \frac{a_n}{b_k}\,\Pi\,u_1
  + \frac{b_n}{b_k}\,\Pi\,u_2
  + t_k
  \;=\; \frac{a_n\Pi\, u_1 + b_n\Pi\, u_2}{b_k} + 0
  \;=\; \frac{c_kb_k}{b_k}
  \;=\; c_k\,.
$$
\item ($D_{x_i}$) for $i \in N_{kn}$: again using \eqref{A3keyidentity},
$$
  \frac{a_n}{b_i}\,\Pi\,u_1
  + \frac{b_n}{b_i}\,\Pi\,u_2
  + t_i
  \;=\; \frac{c_kb_k}{b_i}
        + c_i
        - \frac{c_kb_k}{b_i}
  \;=\; c_i\,.
$$
\item ($D_{x_n}$): 

If $c_nb_n \geq c_kb_k$\,:
$$
  \Pi\, u_2 + t_n
  \;=\; \frac{c_kb_k}{b_n} + \frac{c_nb_n - c_kb_k}{b_n}
  \;=\; c_n\,.
$$
If $c_nb_n < c_kb_k$\,:
$$
  \Pi\, u_2 + t_n
  \;=\; c_n + 0
  \;=\; c_n\,.
$$
\end{itemize}

\noindent
\emph{Nonnegativity.}
\begin{itemize}
\item $u_1 \geq 0$ and $u_2 \geq 0$ (established above);
\item $\displaystyle t_n = \frac{(c_nb_n - c_kb_k)^+}{b_n} \geq 0$;
\item For $i \in N_{kn}$\,:
      $\displaystyle t_i = c_i - \frac{c_kb_k}{b_i} = \frac{c_ib_i - c_kb_k}{b_i} \geq 0$,
      by the definition of $k$;
\item $s_1 = -c_0 - u_1 - u_2 \geq 0$:

      If $c_kb_k \geq c_nb_n$\,: 
$$s_1 = -c_0 - \frac{c_kb_k-c_nb_n}{a_n\Pi} - \frac{c_n}{\Pi} = \frac{-c_0a_n\Pi - c_kb_k+ c_n(b_n-a_n)}{a_n\Pi} \geq 0,$$
     by \eqref{A3condition3}. 
      If $c_kb_k < c_nb_n$\,:     
$$s_1 = -c_0 - 0 - \frac{c_kb_k}{b_n\Pi} = \frac{-c_0b_n\Pi - c_kb_k}{b_n\Pi} \geq 0,$$ 
     by \eqref{A3condition2}. 
\end{itemize}

\smallskip
\noindent
\emph{Objective value.}

Finally, we verify that the dual objective value equals the primal objective $z^* = S^+ + c_n b_n - c_k b_k$\,.

\begin{align*}
  &(n\!-\!2)\,a_n\Pi\, u_1
    + (n\!-\!1)\,b_n\Pi\, u_2
    + \sum_{i \in N_{kn}} b_i\, t_i
    + b_n\, t_n \\[4pt]
  &\quad \quad= (n\!-\!2)(c_kb_k - c_nb_n)^+
    + (n\!-\!1)\,b_n\min\left\{c_n, \, \frac{c_kb_k}{b_n}\right\}\\
  &\quad \quad\quad \quad \quad \quad + \sum_{i \in N_{kn}} b_i(c_i - \frac{c_kb_k}{b_i}) + (c_nb_n - c_kb_k)^+.
\end{align*}

When $c_kb_k \geq c_nb_n$\,, this is equal to:
\begin{align*}
 &(n\!-\!2)(c_kb_k - c_nb_n)
    + (n\!-\!1)\,c_nb_n
    + (S^+ - c_kb_k) - (n\!-\!2)\,c_kb_k
    + 0 \\[4pt]
  &\quad = (n\!-\!2)\,c_kb_k - (n\!-\!2)\,c_nb_n
    + (n\!-\!1)\,c_nb_n
    + S^+ - c_kb_k - (n\!-\!2)\,c_kb_k \\[4pt]
  &\quad = S^+ + c_nb_n - c_kb_k\,.
\end{align*}

When $c_kb_k < c_nb_n$\,, this is equal to:
\begin{align*}
  &0
    + (n\!-\!1)\,c_kb_k
    + (S^+ - c_kb_k) - (n\!-\!2)\,c_kb_k
    + (c_nb_n - c_kb_k) \\[4pt]
  &\quad = S^+ + c_nb_n - c_kb_k\,.
\end{align*}
In both cases, this equals the primal objective, so the primal solution returned by the algorithm in this case: $x_i = b_i$ for $i \in N_{kn}$\,, $x_k = 0$, $x_n = b_n$\,, and $y = 0$, is optimal.

\vspace{5mm}
\textbf{Certificate A.4.}
This certificate applies when $z^0_a$ achieves the maximum; therefore we know the primal objective value of the returned solution is $S^+ - c_kb_k + c_na_n$\,. Moreover, we have
\begin{alignat}{2}
z^0_a \geq z^0_b &\iff c_n \leq 0\,,\label{A4condition1}\\
\text{and}\quad z^0_a \geq z^{\Pi}_a &\iff c_kb_k + c_0 a_n\Pi \leq 0\,.\label{A4condition2}
\end{alignat}
(The third comparison $z^0_a \geq z^{\Pi}_b$ provides no additional information.)
Now consider the following solution to \eqref{dual}:
\begin{itemize}
  \item $\displaystyle u_1 := \frac{c_kb_k}{a_n\Pi}$;
  \item $\displaystyle t_i := c_i - \frac{c_kb_k}{b_i}$,
        \quad for $i \in N_{kn}$\,;
  \item $t_k := 0$;
  \item $t_n := 0$;
  \item $\displaystyle s_1 := \frac{-(c_kb_k + c_0 a_n\Pi)}{a_n\Pi}$;
  \item $s_2 := -c_n$\,.
\end{itemize}
\noindent
To demonstrate feasibility, we verify each dual constraint and nonnegativity condition.

\medskip
\noindent
\emph{Dual constraints.}
\begin{itemize}
\item ($D_y$):
$$-u_1 - s_1 = -\frac{c_kb_k}{a_n\Pi} + \frac{c_kb_k + c_0 a_n\Pi}{a_n\Pi} = c_0\,.$$
\item ($D_{x_k}$):
$$
  \frac{a_n}{b_k}\,\Pi\,u_1 + t_k
  \;=\; \frac{a_n}{b_k}\,\Pi\,\frac{c_kb_k}{a_n\Pi} + 0
  \;=\; c_k\,.
$$
\item ($D_{x_i}$) for $i \in N_{kn}$\,:
$$
  \frac{a_n}{b_i}\,\Pi\,u_1 + t_i
  \;=\; \frac{c_kb_k}{b_i}
        + c_i
        - \frac{c_kb_k}{b_i}
  \;=\; c_i\,.
$$
\item ($D_{x_n}$):
$$
  -s_2 + t_n
  \;=\; c_n + 0
  \;=\; c_n\,.
$$
\end{itemize}

\noindent
\emph{Nonnegativity.}
\begin{itemize}
\item $u_1 = c_kb_k/(a_n\Pi) \geq 0$,
      because $c_k \geq 0$;
\item For $i \in N_{kn}$\,:
      $t_i = (c_ib_i - c_kb_k)/b_i \geq 0$
      by definition of $k$;
\item $s_2 = -c_n \geq 0$
      by \eqref{A4condition1};
\item $s_1 = -(c_kb_k + c_0 a_n\Pi)/(a_n\Pi) \geq 0$
      by \eqref{A4condition2}.
\end{itemize}

\smallskip
\noindent
\emph{Objective value.}

Finally, we verify that the dual objective value equals the primal objective $z^* = S^+ - c_kb_k + c_na_n$\,.
\begin{align*}
  &(n\!-\!2)\,a_n\Pi \cdot u_1 - a_n\, s_2
    + \sum_{i \in N_{kn}} b_i\, t_i \\[4pt]
  &\quad = (n\!-\!2)\,c_kb_k
    + a_nc_n
    + \sum_{i \in N_{kn}} (c_ib_i - c_kb_k) \\[4pt]
  &\quad = (n\!-\!2)\,c_kb_k
    + a_nc_n
    + (S^+ - c_kb_k) - (n\!-\!2)\,c_kb_k \\[4pt]
  &\quad = S^+ - c_kb_k + c_na_n\,.
\end{align*}
This equals the primal objective, so the primal solution returned by the algorithm in this case: $x_i = b_i$ for $i \in N_{kn}$\,, $x_k = 0$, $x_n = a_n$\,, and $y = 0$, is optimal.


\bigskip

We now move to \textbf{Case B} where $N_n^- \neq \emptyset$ (i.e., at least one $c_i < 0$ for $i \in N_n$).  We first note that in this  case $S^- <0$ and all certificates have $u_1 = u_2 = 0$, which simplifies the dual constraints to:
\begin{align*}
(D_y)\!:\; &\sum_{i\in N_n}(v_i + w_i) - s_1 = c_0\,;\\
(D_{x_i})\!:\; &-\frac{a_n}{b_i}\,\Pi\, v_i - \frac{b_n}{b_i}\,\Pi\, w_i + t_i = c_i\,;\\
(D_{x_n})\!:\; &-\Pi\!\sum_{i\in N_n} v_i - s_2 + t_n = c_n\,.
\end{align*}
We present four dual certificates, one for each possible primal optimal objective value.

\vspace{5mm}
\textbf{Certificate B.1.}
This certificate applies when $z^{\Pi}_b$ achieves the maximum; therefore we know the primal objective value of the returned solution is $S^+ + S^- + c_n b_n + c_0 b_n \Pi$. Moreover, we have
\begin{alignat}{2}
z^{\Pi}_b \geq z^{\Pi}_a &\iff c_n + c_0\Pi \geq 0\,,\label{B1condition1}\\
z^{\Pi}_b \geq z^0_b &\iff c_0 b_n\Pi + S^- \geq 0\,,\label{B1condition2}\\
\text{and}\quad z^{\Pi}_b \geq z^0_a &\iff c_0 b_n\Pi + S^- + c_n(b_n\!-\!a_n) \geq 0\,.\label{B1condition3}
\end{alignat}
(When $c_n \geq 0$, \eqref{B1condition3} follows from \eqref{B1condition2};
when $c_n < 0$, \eqref{B1condition2} follows from \eqref{B1condition3}.)

Now let
$$\sigma := \frac{c_n^-}{\Pi} \quad \text{and} \quad
  \lambda_i := \frac{c_i b_i}{S^-} \;\;\text{for } i \in N_n^-\,.$$
When $c_n \geq 0$: $\sigma = 0$.
When $c_n < 0$: $\sigma = -c_n/\Pi > 0$.
Moreover, $\lambda_i \geq 0$ for each $i \in N_n^-$
because $c_i b_i < 0$ and $S^- < 0$. We also have
$$
  \sum_{i \in N_n^-} \lambda_i
  \;=\; \frac{1}{S^-} \sum_{i \in N_n^-} c_i b_i
  \;=\; 1\,.
$$

Finally, we observe that $$c_0 - \sigma \geq 0.$$
This is because when $c_n \geq 0$, $\sigma = 0$ and $c_0 > 0$
(from \eqref{B1condition2} because $S^- < 0$);
when $c_n < 0$, $c_0 \geq \sigma = -c_n/\Pi \;\Longleftrightarrow \; c_0\Pi + c_n \geq 0$, which holds by \eqref{B1condition1}.

Now consider the following solution to \eqref{dual}:
\begin{itemize}
  \item $v_i := \sigma\,\lambda_i$\,, \quad for $i \in N_n^-$;
  \item $w_i := (c_0 - \sigma)\,\lambda_i$\,, \quad for $i \in N_n^-$;
  \item $t_i := c_i$\,, \quad for $i \in N_n^+$;
  \item $\displaystyle t_i := c_i
        + \frac{a_n}{b_i}\,\Pi\,v_i
        + \frac{b_n}{b_i}\,\Pi\,w_i \,=\, c_i\!\left(1 + \frac{\Pi\bigl(b_n c_0 - (b_n\!-\!a_n)\sigma\bigr)}{S^-}\right)$,  \quad for $i \in N_n^-$;
  \item $t_n := c_n + \Pi\sigma   $.
\end{itemize}
\noindent
To demonstrate feasibility, we verify each dual constraint and nonnegativity condition.

\medskip
\noindent
\emph{Dual constraints.}
\begin{itemize}
\item ($D_y$):
$$
  \sum_{i \in N_n^-}(v_i + w_i)
  \;=\;  \sum_{i \in N_n^-} (\sigma\lambda_i + c_0 \lambda_i - \sigma\lambda_i )
  \;=\; c_0 \sum_{i \in N_n^-} \lambda_i
   \;=\; c_0 \cdot 1
  \;=\; c_0\,.
$$
\item ($D_{x_i}$) for $i \in N_n^+$:
$$t_i \;=\; c_i\,.$$
\item ($D_{x_i}$) for $i \in N_n^-$:
$$
  -\frac{a_n}{b_i}\,\Pi\, v_i
  - \frac{b_n}{b_i}\,\Pi\, w_i
  + t_i
  \;=\; -\frac{a_n}{b_i}\,\Pi\, v_i
        - \frac{b_n}{b_i}\,\Pi\, w_i
        + c_i
        + \frac{a_n}{b_i}\,\Pi\, v_i
        + \frac{b_n}{b_i}\,\Pi\, w_i
  \;=\; c_i\,.
$$
\item ($D_{x_n}$):
$$
  -\Pi \sum_{i \in N_n^-} v_i + t_n
  \;=\; -\Pi\sigma \sum_{i \in N_n^-} \lambda_i + c_n + \Pi\sigma
  \;=\; -\Pi\sigma \cdot 1 + c_n + \Pi\sigma
  \;=\; c_n\,.
$$
\end{itemize}

\noindent
\emph{Nonnegativity.}
\begin{itemize}
\item $v_i = \sigma\,\lambda_i \geq 0$
      because $\sigma \geq 0$ and $\lambda_i \geq 0$;
\item $w_i = (c_0 - \sigma)\,\lambda_i \geq 0$
      because $c_0 - \sigma \geq 0$ (established above) and $\lambda_i \geq 0$;
\item $t_i = c_i \geq 0$ for $i \in N_n^+$;
\item $t_n = c_n + \Pi\sigma = c_n + c_n^- \geq 0$:

      When $c_n \geq 0$: $t_n = c_n \geq 0$.

      When $c_n < 0$: $t_n = c_n + (-c_n) = 0$;
\item For $i \in N_n^-$: substituting $v_i = \sigma\lambda_i$\,,
      $w_i = (c_0 - \sigma)\lambda_i$\,, and $\lambda_i = c_ib_i/S^-$, we have
      $$
        t_i
        \;=\; c_i\!\left(1 + \frac{\Pi\bigl(b_n c_0 - (b_n\!-\!a_n)\sigma\bigr)}{S^-}\right).
      $$
      Now because $c_i < 0$,
      \begin{align*}
       t_i \geq 0 &\,\Longleftrightarrow \, 1 + \frac{\Pi\bigl(b_n c_0 - (b_n\!-\!a_n)\sigma\bigr)}{S^-} \leq 0\, \\
       &\,\Longleftrightarrow \, S^- + \Pi\bigl(b_n c_0 - (b_n\!-\!a_n)\sigma\bigr) \geq 0 \\
        &\,\Longleftrightarrow \,  c_0 b_n\Pi + S^- - (b_n\!-\!a_n)\Pi\sigma \geq 0\, \\
         &\,\Longleftrightarrow \,  c_0 b_n\Pi + S^- - (b_n\!-\!a_n)c_n^-\geq 0\,.
      \end{align*}
      
      When $c_n \geq 0$: $c_n^- = 0$, so the condition reduces to
      $c_0 b_n\Pi + S^- \geq 0$, which holds by \eqref{B1condition2}.

      When $c_n < 0$: $c_n^- = -c_n$\,, so the condition becomes
      $c_0 b_n\Pi + S^- + c_n(b_n\!-\!a_n) \geq 0$, which holds by \eqref{B1condition3}.
\end{itemize}

\smallskip
\noindent
\emph{Objective value.}

Finally, we verify that the dual objective value equals the primal objective $z^* = S^+ + S^- + c_n b_n + c_0 b_n \Pi$.
\begin{align*}
  &-a_n \Pi \sum_{i \in N_n^-} v_i
    + \sum_{i \in N_n^+} b_i\, t_i
    + \sum_{i \in N_n^-} b_i\, t_i
    + b_n\, t_n \\[4pt]
  &\quad = -a_n\Pi \sum_{i \in N_n^-} v_i
    + S^+
    + \sum_{i \in N_n^-} \bigl(c_i b_i + a_n\Pi\, v_i + b_n\Pi\, w_i\bigr)
    + b_n(c_n + \Pi\sigma) \\[4pt]
  &\quad = -a_n\Pi \sum_{i \in N_n^-} v_i
    + S^+ + S^-
    + a_n\Pi \sum_{i \in N_n^-} v_i
    + b_n\Pi \sum_{i \in N_n^-} w_i
    + c_n b_n + b_n\Pi\sigma \\[4pt]
  &\quad = S^+ + S^-
    + b_n\Pi \sum_{i \in N_n^-} (c_0 - \sigma)\lambda_i
    + c_n b_n + b_n\Pi\sigma \\[4pt]
  &\quad = S^+ + S^-
    + b_n\Pi(c_0 - \sigma)
    + c_n b_n + b_n\Pi\sigma \\[4pt]
  &\quad = S^+ + S^- + c_n b_n + c_0 b_n \Pi.
\end{align*}

This equals the primal objective, so the primal solution returned by the algorithm in this case: $x_i = b_i$ for all $i \in N$ and $y = b_n \Pi$, is optimal.

\vspace{5mm}
\textbf{Certificate B.2.}
This certificate applies when $z^{\Pi}_a$ achieves the maximum; therefore we know the primal objective value of the returned solution is $S^+ + S^- + c_n a_n + c_0 a_n \Pi$. Moreover, we have
\begin{alignat}{2}
z^{\Pi}_a \geq z^{\Pi}_b &\iff c_n + c_0\Pi \leq 0\,,\label{B2condition1}\\
\text{and}\quad z^{\Pi}_a \geq z^0_a &\iff c_0 a_n\Pi + S^- \geq 0\,.\label{B2condition2}
\end{alignat}
(The remaining comparison $z^{\Pi}_a \geq z^0_b$ is not required in the feasibility argument below.)
Fix any $\ell \in N_n^-$. Now consider the following solution to \eqref{dual}:
\begin{itemize}
  \item $\displaystyle v_i := \frac{-c_i b_i}{a_n \Pi}$,
        \quad for $i \in N_n^- - \ell$;
  \item $\displaystyle v_{\ell} := \frac{c_0 a_n \Pi + S^- - c_{\ell} b_{\ell}}{a_n \Pi}$;
  \item $t_i := c_i$\,, \quad for $i \in N_n^+$;
  \item $t_i := 0$, \quad for $i \in N_n^- - \ell$;
  \item $\displaystyle t_{\ell} := \frac{c_0 a_n \Pi + S^-}{b_{\ell}}$;
  \item $t_n := 0$;
  \item $s_2 := -(c_n + c_0\Pi)$.
\end{itemize}
\noindent
To demonstrate feasibility, we verify each dual constraint and nonnegativity condition.

\medskip
\noindent
\emph{Dual constraints.}
\begin{itemize}
\item ($D_y$):
$$
  \sum_{i \in N_n^-} v_i
  \;=\; \sum_{i \in N_n^- - \ell} \frac{-c_i b_i}{a_n \Pi}
        + \frac{c_0 a_n \Pi + S^- - c_{\ell} b_{\ell}}{a_n \Pi}
  \;=\; \frac{-S^- + c_{\ell} b_{\ell} + c_0 a_n \Pi + S^- - c_{\ell} b_{\ell}}{a_n \Pi}
  \;=\; c_0\,.
$$
\item ($D_{x_i}$) for $i \in N_n^+$:
$$t_i \;=\; c_i\,.$$
\item ($D_{x_i}$) for $i \in N_n^- - \ell$:
$$
  -\frac{a_n}{b_i}\,\Pi\, v_i + t_i
  \;=\; -\frac{a_n}{b_i}\,\Pi\,\frac{-c_i b_i}{a_n \Pi} + 0
  \;=\; c_i\,.
$$
\item ($D_{x_{\ell}}$):
$$
  -\frac{a_n}{b_{\ell}}\,\Pi\, v_{\ell} + t_{\ell}
  \;=\; -\frac{c_0 a_n \Pi + S^- - c_{\ell} b_{\ell}}{b_{\ell}}
        + \frac{c_0 a_n \Pi + S^-}{b_{\ell}}
  \;=\; c_{\ell}\,.
$$
\item ($D_{x_n}$):
$$
  -\Pi \sum_{i \in N_n^-} v_i - s_2 + t_n
  \;=\; -c_0\Pi + (c_n + c_0\Pi) + 0
  \;=\; c_n\,.
$$
\end{itemize}

\noindent
\emph{Nonnegativity.}
\begin{itemize}
\item $\displaystyle v_i = \frac{-c_i b_i}{a_n \Pi} \geq 0$
      for $i \in N_n^- - \ell$, because $c_i < 0$;
\item $\displaystyle v_{\ell} = \frac{c_0 a_n \Pi + S^- - c_{\ell} b_{\ell}}{a_n \Pi} \geq 0$,
      because $c_0 a_n \Pi + S^- \geq 0$ by \eqref{B2condition2}
      and $c_{\ell} b_{\ell} < 0$;
\item $t_i = c_i \geq 0$ for $i \in N_n^+$;
\item $\displaystyle t_{\ell} = \frac{c_0 a_n \Pi + S^-}{b_{\ell}} \geq 0$,
      because $c_0 a_n \Pi + S^- \geq 0$ by \eqref{B2condition2};
\item $s_2 = -(c_n + c_0\Pi) \geq 0$
      by \eqref{B2condition1}.
\end{itemize}
\smallskip
\noindent
\emph{Objective value.}

Finally, we verify that the dual objective value equals the primal objective $z^* = S^+ + S^- + c_n a_n + c_0 a_n \Pi$.
\begin{align*}
  &-a_n \Pi \sum_{i \in N_n^-} v_i
    - a_n\, s_2
    + b_{\ell}\, t_{\ell}
    + \sum_{i \in N_n^+} b_i\, t_i \\[4pt]
  &\quad = -a_n \Pi \cdot c_0
    - a_n\bigl(-(c_n + c_0\Pi)\bigr)
    + (c_0 a_n \Pi + S^-)
    + S^+ \\[4pt]
  &\quad = -c_0 a_n \Pi
    + a_n c_n + a_n c_0\Pi
    + c_0 a_n \Pi + S^-
    + S^+ \\[4pt]
  &\quad = S^+ + S^- + c_n a_n + c_0 a_n \Pi.
\end{align*}
This equals the primal objective, so the primal solution returned by the algorithm in this case: $x_i = b_i$ for all $i \in N_n$\,, $x_n = a_n$\,, and $y = a_n \Pi$, is optimal.

\vspace{5mm}
\textbf{Certificate B.3.}
This certificate applies when $z^0_b$ achieves the maximum; therefore we know the primal objective value of the returned solution is $S^+ + c_n b_n$\,. Moreover, we have
\begin{alignat}{2}
z^0_b \geq z^0_a &\iff c_n \geq 0\,,\label{B3condition1}\\
\text{and}\quad z^0_b \geq z^{\Pi}_b &\iff c_0 b_n\Pi + S^- \leq 0\,.\label{B3condition2}
\end{alignat}
(The remaining comparison $z^0_b \geq z^{\Pi}_a$ holds by assumption but is not needed in the feasibility argument below.)
Now consider the following solution to \eqref{dual}:
\begin{itemize}
  \item $\displaystyle w_i := \frac{-c_i b_i}{b_n \Pi}$,
        \quad for $i \in N_n^-$;
  \item $t_i := c_i$\,, \quad for $i \in N_n^+$;
  \item $t_i := 0$, \quad for $i \in N_n^-$;
  \item $t_n := c_n$\,;
  \item $\displaystyle s_1 := \frac{-(c_0 b_n\Pi + S^-)}{b_n\Pi}$.
\end{itemize}
\noindent
To demonstrate feasibility, we verify each dual constraint and nonnegativity condition.

\medskip
\noindent
\emph{Dual constraints.}
\begin{itemize}
\item ($D_y$):
$$
  \sum_{i \in N_n^-} w_i - s_1
  \;=\; \frac{-S^-}{b_n\Pi}
        + \frac{c_0 b_n\Pi + S^-}{b_n\Pi}
  \;=\; c_0\,.
$$
\item ($D_{x_i}$) for $i \in N_n^+$:
$$t_i \;=\; c_i\,.$$
\item ($D_{x_i}$) for $i \in N_n^-$:
$$
  -\frac{b_n}{b_i}\,\Pi\, w_i + t_i
  \;=\; -\frac{b_n}{b_i}\,\Pi\,\frac{-c_i b_i}{b_n\Pi} + 0
  \;=\; c_i\,.
$$
\item ($D_{x_n}$):
$$t_n \;=\; c_n\,.$$
\end{itemize}

\noindent
\emph{Nonnegativity.}
\begin{itemize}
\item $\displaystyle w_i = \frac{-c_i b_i}{b_n \Pi} \geq 0$
      for $i \in N_n^-$, because $c_i < 0$;
\item $t_i = c_i \geq 0$ for $i \in N_n^+$;
\item $t_n = c_n \geq 0$
      by \eqref{B3condition1};
\item $\displaystyle s_1 = \frac{-(c_0 b_n\Pi + S^-)}{b_n\Pi} \geq 0$
      by \eqref{B3condition2}.
\end{itemize}

\smallskip
\noindent
\emph{Objective value.}

Finally, we verify that the dual objective value equals the primal objective $z^* = S^+ + c_n b_n$\,.

$$  \sum_{i \in N_n^+} b_i\, t_i + b_n\, t_n  = S^+ + c_n b_n\,.$$

\noindent
This equals the primal objective, so the primal solution returned by the algorithm in this case: $x_i = b_i$ for $i \in N_n^+$, $x_i = 0$ for $i \in N_n^-$, $x_n = b_n$\,, and $y = 0$, is optimal.

\vspace{5mm}
\textbf{Certificate B.4.}
This certificate applies when $z^0_a$ achieves the maximum; therefore we know the primal objective value of the returned solution is $S^+ + c_n a_n$\,. Moreover, we have
\begin{alignat}{2}
z^0_a \geq z^0_b &\iff c_n \leq 0\,,\label{B4condition1}\\
z^0_a \geq z^{\Pi}_a &\iff c_0 a_n\Pi + S^- \leq 0\,,\label{B4condition2}\\
\text{and}\quad z^0_a \geq z^{\Pi}_b &\iff c_0 b_n\Pi + S^- + c_n(b_n\!-\!a_n) \leq 0\,.\label{B4condition3}
\end{alignat}

Now let $$\beta := \min\!\left(\frac{1}{a_n\Pi},\; \frac{-c_n}{(-S^-)\Pi}\right) \quad \text{and} \quad \alpha := \frac{1 - a_n\Pi\,\beta}{b_n\Pi},$$
and note $\beta \geq 0$ because $a_n, \Pi > 0$, $-c_n \geq 0$ (by \eqref{B4condition1}), and $-S^- > 0$.
Moreover, $\alpha \geq 0$ because $\beta \leq 1/(a_n\Pi)$ by definition,
so $a_n\Pi\,\beta \leq 1$.

Also note that by construction we have
$$
  a_n\Pi\,\beta + b_n\Pi\,\alpha
  \;=\; a_n\Pi\,\beta + b_n\Pi \cdot \frac{1 - a_n\Pi\,\beta}{b_n\Pi}
  \;=\; a_n\Pi\,\beta + 1 - a_n\Pi\,\beta
  \;=\; 1\,.
$$
Finally, if $\beta = 1/(a_n\Pi)$, then
$$
  \alpha \;=\; \frac{1 - a_n\Pi \cdot \frac{1}{a_n\Pi}}{b_n\Pi}
  \;=\; \frac{1 - 1}{b_n\Pi}
  \;=\; 0\,.
$$
Whereas if $\beta = -c_n/((-S^-)\Pi)$, then
$$
  \alpha \;=\; \frac{1 - a_n\Pi \cdot \frac{-c_n}{(-S^-)\Pi}}{b_n\Pi}
  \;=\; \frac{1 + \frac{a_n c_n}{-S^-}}{b_n\Pi}
  \;=\; \frac{-S^- + a_n c_n}{(-S^-)b_n\Pi}\,,
$$
and
$$
  \alpha + \beta
  \;=\; \frac{-S^- + a_n c_n}{(-S^-)b_n\Pi} + \frac{-c_n}{(-S^-)\Pi}
  \;=\; \frac{-S^- + a_n c_n + (-c_n)b_n}{(-S^-)b_n\Pi}
  \;=\; \frac{-S^- + (a_n - b_n)c_n}{(-S^-)b_n\Pi}
  \;=\; \frac{-S^- - c_n(b_n\!-\!a_n)}{(-S^-)b_n\Pi}\,.
$$

With that in mind, we consider the following solution to \eqref{dual}:
\begin{itemize}
  \item $v_i := -\beta\, c_i b_i$\,, \quad for $i \in N_n^-$;
  \item $w_i := -\alpha\, c_i b_i$\,, \quad for $i \in N_n^-$;
  \item $t_i := c_i$\,, \quad for $i \in N_n^+$;
  \item $t_i := 0$, \quad for $i \in N_n^-$;
  \item $t_n := 0$;
  \item $s_2 := -c_n + \Pi\,\beta\, S^-$;
  \item $s_1 := -(\alpha + \beta)\,S^- - c_0$\,.
\end{itemize}
\noindent
To demonstrate feasibility, we verify each dual constraint and nonnegativity condition.

\medskip
\noindent
\emph{Dual constraints.}
\begin{itemize}
\item ($D_y$):
$$
  \sum_{i \in N_n^-}(v_i + w_i) - s_1
  \;=\; -(\beta + \alpha)\,S^- - \bigl(-(\alpha + \beta)\,S^- - c_0\bigr)
  \;=\; c_0\,.
$$
\item ($D_{x_i}$) for $i \in N_n^+$:
$$t_i \;=\; c_i\,.$$
\item ($D_{x_i}$) for $i \in N_n^-$:
$$
  -\frac{a_n}{b_i}\,\Pi\, v_i
  - \frac{b_n}{b_i}\,\Pi\, w_i
  + t_i
  \;=\; \frac{c_i b_i}{b_i}\bigl(a_n\Pi\,\beta + b_n\Pi\,\alpha\bigr) + 0
  \;=\; c_i \cdot 1
  \;=\; c_i\,.
$$
\item ($D_{x_n}$):
$$
  -\Pi \sum_{i \in N_n^-} v_i - s_2 + t_n
  \;=\; \Pi\,\beta\, S^- - (-c_n + \Pi\,\beta\, S^-) + 0
  \;=\; c_n\,.
$$
\end{itemize}

\noindent
\emph{Nonnegativity.}
\begin{itemize}
\item $v_i = -\beta\, c_i b_i \geq 0$
      because $\beta \geq 0$ and $c_i b_i < 0$ for $i \in N_n^-$;
\item $w_i = -\alpha\, c_i b_i \geq 0$
      because $\alpha \geq 0$ and $c_i b_i < 0$ for $i \in N_n^-$;
\item $t_i = c_i \geq 0$ for $i \in N_n^+$;
\item $s_2 = -c_n + \Pi\,\beta\, S^- \geq 0$:
      because $\beta \leq -c_n/((-S^-)\Pi)$ by definition, we have
      $-\Pi\,\beta\, S^- \leq -c_n\, \Longleftrightarrow \,-c_n + \Pi\,\beta\, S^- \geq 0$;
\item $s_1 = -(\alpha + \beta)\,S^- - c_0 \geq 0$:

      If $\beta = 1/(a_n\Pi)$: then $\alpha = 0$ and
      $$
        s_1 = \frac{-S^-}{a_n\Pi} - c_0
        = \frac{-(c_0 a_n\Pi + S^-)}{a_n\Pi}
       \geq 0
      $$
      by \eqref{B4condition2}.

      If $\beta = -c_n/((-S^-)\Pi)$: then
      $$
        s_1 = -\left(\frac{-S^- - c_n(b_n\!-\!a_n)}{(-S^-)b_n\Pi}\right)S^- - c_0 =\frac{-(c_0 b_n\Pi + S^- + c_n(b_n\!-\!a_n))}{b_n\Pi}
        \geq 0
      $$
      by \eqref{B4condition3}.
\end{itemize}

\smallskip
\noindent
\emph{Objective value.}

Finally, we verify that the dual objective value equals the primal objective $z^* = S^+ + c_n a_n$\,.
\begin{align*}
  &-a_n\Pi \sum_{i \in N_n^-} v_i
    - a_n\, s_2
    + \sum_{i \in N_n^+} b_i\, t_i \\[4pt]
  &\quad = a_n\Pi\,\beta\, S^-
    - a_n(-c_n + \Pi\,\beta\, S^-)
    + S^+ \\[4pt]
  &\quad = S^+ + c_n a_n\,.
\end{align*}
This equals the primal objective, so the primal solution returned by the algorithm in this case: $x_i = b_i$ for $i \in N_n^+$, $x_i = 0$ for $i \in N_n^-$, $x_n = a_n$\,, and $y = 0$, is optimal.

\medskip

In every case, the primal algorithm returns an extreme point of $C^1_n$ that is feasible for~\eqref{primal}, and the corresponding dual certificate is feasible for~\eqref{dual} with the same objective value. By strong duality, both solutions are optimal. Since the objective $c_0 y + \sum_{i \in N} c_i x_i$ was arbitrary, the inequality system in the statement of the theorem is a complete description of~$C^1_n$\,.

\end{proof}
 

\section{Volume}\label{sec:vol}

In this section, we establish a formula for the volume of  
$C_n^1$\,, for all $n$. 

\begin{theorem}\label{thm:vol}
For all $n\geq 2$,
\[
\vol{n\!+\!1}{C_n^1}
= \frac{(b_n-a_n) \prod_{i\in N_n}b_i^2}{(n+1)!} \;\bigg ( (n! - 1)b_n + \big ((n-1)! - n \big )a_n \bigg ).
\]
\end{theorem}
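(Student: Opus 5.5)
The plan is to compute the volume as the integral, over the box domain, of the gap between the upper and lower boundaries of $C_n^1$. These boundaries are read off from the facet description of Theorem~\ref{thm:facets}.

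\textbf{Normalizing the box.} First I would substitute $x_i = b_i t_i$ for $i\in N_n$ and $y=\Pi z$, with $\Pi=\prod_{i\in N_n}b_i$. This is a linear map with determinant $\prod_{i\in N_n} b_i\cdot \Pi=\prod_{i\in N_n}b_i^2$. It carries $C_n^1$ onto the same object with $b_i=1$ for $i\in N_n$. The inequalities of Theorem~\ref{thm:facets} transform accordingly: set $\Pi=1$ and replace each $x_i/b_i$ by $t_i$. So it suffices to prove the formula with $b_1=\dots=b_{n-1}=1$, writing $a:=a_n$, $b:=b_n$ and $s:=x_n\in[a,b]$.

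\textbf{Reducing to one-dimensional integrals.} By Theorem~\ref{thm:facets}, the polytope is $\{(t,s,z): L(t,s)\le z\le U(t,s),\ t\in[0,1]^{n-1},\ s\in[a,b]\}$, where
\[
U=\min_{i\in N_n}\min\{a t_i+s-a,\; b t_i\}, \qquad L=\max\Bigl\{0,\; a\bigl(\textstyle\sum_{i} t_i-(n-2)\bigr),\; b\textstyle\sum_i t_i+s-(n-1)b\Bigr\}.
\]
Hence the volume is $\int U-\int L$. Both pieces collapse to one-variable integrals.
\begin{itemize}
\item Both expressions inside $U$ are increasing in $t_i$, so $U=\min\{a m+s-a,\; bm\}$ with $m=\min_i t_i$. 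The density of $m$ on $[0,1]$ is $(n-1)(1-m)^{n-2}$.
\item $L$ depends only on $\sigma=\sum_i t_i$, and is nonzero only for $\sigma\ge n-2$. Putting $u=(n-1)-\sigma\in[0,1]$, the density of $u$ is $u^{n-2}/(n-2)!$, because near the corner $(1,\dots,1)$ the cube is a simplex. In this variable, $L=\max\{0,\ a(1-u),\ s-bu\}$.
\end{itemize}

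\textbf{Evaluating and assembling.} For fixed $s$, the two terms in $U$ cross at $m^*=(s-a)/(b-a)\in[0,1]$. For $L$, the terms $a(1-u)$ and $s-bu$ cross at $u^*=(s-a)/(b-a)$, the same breakpoint. The nonzero part of $L$ vanishes at $u=1$, where $a(1-u)=0$ and $s-b\le 0$. So each of $\int U$ and $\int L$ splits into two polynomial integrals against Beta-type densities. After the substitution $s=a+(b-a)\theta$ with $\theta\in[0,1]$, the breakpoint is just $\theta$. The outer integral then produces the factor $(b-a)$, and the remaining integrals are Beta integrals in $\theta$ that give factorials.

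I expect the main obstacle to be this final bookkeeping: tracking the incomplete Beta integrals $\int_0^\theta$ and $\int_\theta^1$ and recombining them into the clean form $\bigl((n!-1)b+((n-1)!-n)a\bigr)/(n+1)!$. A convenient organizing principle is to write
\[
\int U=\int_{\text{box}} (a t_{\min}+s-a)\;-\;\int (\text{positive part of the difference of the two terms}),
\]
and similarly for $L$, so that the unconditional parts are easy moments.

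As sanity checks, I would verify three cases: $a\to 0$ should recover $\vol{n+1}{C_n^0}$ from the introduction; $n=2$ should match the McCormick volume $\tfrac16(b_1)^2(b_2-a_2)^2$, consistent with $C_2^1$ being a special case of $C_2^2$; and for $n=3$, a direct computation against the known $C_3^3$ formula.
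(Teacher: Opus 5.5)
Your proposal is correct, and it takes a genuinely different route from the paper.

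\textbf{The paper's route.} The paper's volume proof never uses Theorem~\ref{thm:facets}. It works from the vertices:
\begin{enumerate}
\item It takes the prism $P_n$ in $y=0$, which is the cut-off box times $[a_n,b_n]$.
\item It cones $P_n$ over $v_1$ to get the pyramid $Q_{n+1}$, lifting the facets of $P_n$ to obtain those of $Q_{n+1}$.
\item It adds $\conv(F\cup\{v_2\})$ for the $n$ facets $F$ of $Q_{n+1}$ that $v_2$ violates. Each term needs the facet's $n$-volume and the Euclidean distance from $v_2$, with square roots that cancel.
\end{enumerate}

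\textbf{Your route.} You use the H-description and Fubini. The symmetry you spotted makes every integral a Beta integral: $U$ depends only on $m=\min_i t_i$, and $L$ only on $u=(n-1)-\sum_i t_i$.

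\textbf{The bookkeeping closes.} Write $s=a+(b-a)\theta$. Then $U=am+(b-a)\min\{m,\theta\}$, and for $u\in[0,1]$, $L=a(1-u)+(b-a)(\theta-u)^+$. Over $[0,1]^{n-1}\times[a,b]$ this gives
\[
\int U=(b-a)\Bigl(\frac{a}{n}+\frac{b-a}{n+1}\Bigr),\qquad \int L=(b-a)\Bigl(\frac{a}{n!}+\frac{b-a}{(n+1)!}\Bigr).
\]
Their difference is exactly $\tfrac{b-a}{(n+1)!}\bigl((n!-1)b+((n-1)!-n)a\bigr)$, as required.

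\textbf{One point to state.} $\int U-\int L$ is the volume only if $L\le U$ on the whole box. This holds because $C_n^1$ projects onto the box. It also follows directly from $m\ge 1-u$.

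\textbf{Trade-offs.} Your route is shorter and more mechanical. It also gives the volume under the upper envelope and the volume above the lower envelope separately, which helps when comparing relaxations that weaken only one side. It does rely on the full facet description of Theorem~\ref{thm:facets}. The paper's decomposition is independent of that theorem and shows the combinatorial structure (prism, pyramid, visible facets) directly.
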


\begin{proof}

The extreme points of $C_n^1$ consist of $2^n$ points in $\mathbb{R}^{n+1}$.  We have $2^{n-1}$ points of the form:
\[
(x_1 \,, \dots , x_{n-1} \,, a_n \,, y )^\top ,
\]
and $2^{n-1}$ points of the form:
\[
(x_1 \,, \dots , x_{n-1} \,, b_n \,, y )^\top .
\]
Moreover, each set of $2^{n-1}$ points contains exactly one point where $y\neq0$.

We first consider the $2^n-2$ points that lie on the hyperplane $y = 0$, and note that half lie on the hyperplane $x_n = a_n$ and half on the hyperplane $x_n = b_n$\,.

Consider the convex hull of the $2^{n-1}-1$ points that lie on the hyperplanes $y=0$ and $x_n = a_n$ projected onto the first $n-1$ components, and denote this  set by $B$. $B$ is a box of dimension $n-1$ with one corner ``cut off''.  Specifically, it is the convex hull of all points where each component is set to either its lower or upper bound, excepting the point $(b_1,\dots,b_n)^{\top}$.  
Therefore the $(n-1)$-volume is given by:

\begin{align*}
 \vol{n\!-\!1}{B}&= \prod_{i\in N_n} b_i - \left ( \frac{1}{(n-1)!}\left|\det
\begin{pmatrix}
0 & 0 & \cdots & 0& -b_1 \\
0 & 0 & \cdots &-b_2 & 0 \\
\vdots & \vdots & \iddots & \vdots & \vdots \\
0 & -b_{n-2} & \cdots & 0 & 0 \\
-b_{n-1} & 0 & \cdots & 0 & 0
\end{pmatrix}\right|\right )\\
 &= \prod_{i\in N_n} b_i -  \frac{1}{(n-1)!} \cdot \left| \prod_{i\in N_n} b_i \right| \\
 &= \frac{(n-1)!-1}{(n-1)!} \cdot \prod_{i\in N_n} b_i\,.
\end{align*}

Now consider the convex hull of the $2^{n-1}-1$ points that lie on the hyperplanes $y=0$ and $x_n = b_n$ projected onto the first $n-1$ components.  The convex hull of these points is also the set $B$.  Therefore, the $2^n-2$ points that lie on the hyperplane $y = 0$ form a prism with base $B$ in the parallel hyperplanes $x_n = a_n$ and $x_n = b_n$\,.  The $n$-volume of this prism, which we denote $P_n:=P_n(\bm{a},\bm{b})$, is:
\begin{align*}
 \vol{n}{P_n} = {\frac{(b_n-a_n)\left ((n-1)!-1\right )}{(n-1)!} \prod_{i\in N_n} b_i}\,.
\end{align*}

We observe that $C_n^1$ is the convex  hull of the $n$-dimensional prism, $P_n$\,, sitting in the $y = 0$ plane with two additional extreme points (that do not have their $y$-component equal to zero).
Let
\begin{align*}
 {
v_1 := \begin{pmatrix} b_1 \\ \vdots \\ b_{n-1} \\ a_n \\ b_1\ldots b_{n-1}a_n \end{pmatrix}
\quad \text{and} \quad
v_2:= \begin{pmatrix} b_1 \\ \vdots \\ b_{n-1} \\ b_n \\ b_1\ldots b_{n-1}b_n \end{pmatrix}.
}
\end{align*}
 We will calculate the volume of $C_n^1$ by first considering
 $$Q_{n+1}:= \conv(P_n \cup \{v_1\}), $$
 and then
$$C_n^1 = \conv(Q_{n+1} \cup \{v_2\}).$$

To obtain the volume of $Q_{n+1}$\,, we observe that it is a pyramid with base $P_n$ and apex $v_1$\,.  Therefore its volume is given by
\begin{align*}
\vol{n\!+\!1}{Q_{n+1}} = \frac{\text{base} \times \text{height}}{\text{dimension}}  &= \frac{(b_n-a_n)\frac{(n-1)!-1}{(n-1)!}\left(\prod_{i\in N_n} b_i \right)\left( a_n\prod_{i\in N_n} b_i\right)}{n+1}\\
&= \frac{(n-1)!-1}{(n+1)(n-1)!} \cdot a_n(b_n-a_n) \prod_{i\in N_n} b_i^2.
\end{align*}

We are now ready to compute the volume of
$C_n^1 = \conv(Q_{n+1} \cup \{v_2\})$.
Our first step is to compute the facets of $Q_{n+1}$\,. Note that the facets of a pyramid are exactly the base, in addition to the convex hull of the apex with each facet of the base. Therefore, we take each facet of $P_n$ and lift to find the equation of the hyperplane defined by each facet of the prism and $v_1$\,.

Observe $P_n$ has the $2n + 1$ facet defining inequalities given below:
\begin{align}
x_i &\geq 0, & i\in N_n\,; \label{eq:Pnfacet1} \\
b_i - x_i &\geq 0, & i\in N_n\,; \label{eq:Pnfacet2}\\
x_n - a_n &\geq 0; &\label{eq:Pnfacet3}\\
b_n - x_n &\geq 0; &\label{eq:Pnfacet4}\\
(n-2) \; - \;\sum_{i\in N_n}\frac{1}{b_i}x_i &\geq 0. &\label{eq:Pnfacet5}
\end{align}

By lifting, we obtain the following:
\begin{align}
x_i - \frac{1}{a_n \prod_{j\in N_{in}} b_j} y &\geq 0, & i\in N_n\,;  \tag{\ref{eq:Pnfacet1}*}  \label{eq:Pnfacet1s}\\
b_i - x_i &\geq 0, & i\in N_n\,; \tag{\ref{eq:Pnfacet2}*} \label{eq:Pnfacet2s}\\
x_n - a_n &\geq 0; & \tag{\ref{eq:Pnfacet3}*} \label{eq:Pnfacet3s}\\
b_n - \left (x_n + \frac{b_n-a_n}{a_n \prod_{j\in N_n} b_j} y \right ) &\geq 0;& \tag{\ref{eq:Pnfacet4}*} \label{eq:Pnfacet4s}\\
(n-2) - \left (\sum_{i\in N_n} \frac{1}{b_i} x_i - \frac{1}{a_n\prod_{j\in N_n} b_j} y\right ) &\geq 0. & \tag{\ref{eq:Pnfacet5}*}\label{eq:Pnfacet5s}
\end{align}
\addtocounter{equation}{1}
\eqref{eq:Pnfacet1s} -- \eqref{eq:Pnfacet5s}, along with
 \begin{align}
 y &\geq 0 \tag{\theequation$^*$} \label{eq:Pnfacet6s},
 \end{align}
give the facet description of $Q_{n+1}$\,.

We now calculate the ``extra" volume generated by including $v_2$\,.  This can be calculated by summing the volumes of
$$\conv (F \cup \{v_2\}), $$
where $F$ is a facet of $Q_{n+1}$ violated by the point $v_2$\,.

We substitute $v_2$ into each of the facet defining inequalities \eqref{eq:Pnfacet1s} -- \eqref{eq:Pnfacet6s}, noting that \eqref{eq:Pnfacet1s} and \eqref{eq:Pnfacet2s} each represent $n-1$ inequalities,  to calculate which are violated, i.e., separate $v_2$ from $Q_{n+1}$\,.
\begin{table}
\renewcommand{\arraystretch}{2.2}
 \begin{tabular}{|c|c|c|}
\hline
Equation(s) & Quantity when substituting $v_2$ into LHS & Inequality satisfied by $v_2$? \\\hline
\eqref{eq:Pnfacet1s} & $b_i\left(1-\frac{b_n}{a_n}\right) $ \; for $i \in N_n$ & \xmark \quad separates \\ \hline
\eqref{eq:Pnfacet2s} &$0$ \; for $i \in N_n$  & \cmark \quad satisfies\\\hline
\eqref{eq:Pnfacet3s} & $b_n - a_n$ & \cmark \quad satisfies \\\hline
\eqref{eq:Pnfacet4s} & $-(b_n-a_n) \cdot \frac{b_n}{a_n}$ & \xmark \quad separates \\\hline
\eqref{eq:Pnfacet5s} &$\frac{b_n}{a_n} - 1$ & \cmark \quad satisfies \\\hline
\eqref{eq:Pnfacet6s} & $\prod_{i\in N} b_i$ & \cmark \quad satisfies \\\hline
\end{tabular}
\caption{Substituting $v_2$ into the facet-describing inequalities of $Q_{n+1}$\,.} 
\label{table:subs}
\end{table}
In Table \ref{table:subs}, we see that $n$ facets separate $v_2$ from $Q_{n+1}$\,.  Note that $n-1$ of these have the same structure.

For each violated facet, we now calculate the $(n+1)$-volume of the convex hull of that facet with $v_2$\,. To do this, given a violated facet, we first calculate which extreme points of $Q_{n+1}$ lie on the facet. We then compute the $n$-volume of the facet, before computing the $(n+1)$-volume of the facet with $v_2$\,.

Let $F_i$ be the violated facet defined by \eqref{eq:Pnfacet1s} for some fixed $i \in N_n\,$.  Observe that an extreme point of $Q_{n+1}$\,, $\begin{pmatrix} \bm{x} \\ y \end{pmatrix} \in \R^{n+1}$, lies on this facet if and only if 
\begin{equation}
x_i = \frac{1}{a_n\prod_{j\in N_{in}}b_j} y.\tag{\ref{eq:Pnfacet1s}$_i$}\label{Pnfacet1si}
\end{equation}
Moreover, note that for all but one extreme point of $Q_{n+1}$\,, we have $y=0$.  Hence, the point lies on $F_i$ if and only if $x_i = 0$.  By substituting the final extreme point, $v_1$, into \eqref{Pnfacet1si}, we see that $v_1$ also lies on $F_i$\,.

 We wish to calculate the $(n+1)$-volume of $$\conv(F_i \cup \{v_2\}),$$
but first we compute the $n$-volume of $F_i$\,.

First, note that $v_1$ is the only extreme point on $F_i$ with $x_i \neq 0$, i.e., the remaining extreme points lie in the hyperplane $x_i = 0$.  Moreover, we have already seen that these points also lie in the hyperplane $y = 0$. Therefore, $F_i$ itself is a pyramid with $v_1$ being the apex, and the convex hull of the remaining points the base.

Define $\tilde{F}_i$ to be the base of the pyramid $F_i$\,.  The object $\tilde{F}_i$ is $(n-1)$-dimensional; more specifically, we observe that $\tilde{F}_i$ is a prism.

Consider the $2^{n-2}$ extreme points of the form,
\[
(
x_1 \,,
\ldots ,
x_{i-1} \,,
0 ,
x_{i+1} \,,
\ldots ,
x_{n-1} \,,
a_n \,,
0
)^\top. 
\]
The convex hull of these points projected onto the $x_i=0$, $x_n = a_n$\,, and $y=0$ planes is the box, 
\[
\{x \in \R^{n-2} ~:~ 0 \leq x_j \leq b_j\,, \;\; j\in N_{in}\},
\]
with volume 
$
\prod_{j\in N_{in}} b_j\,.
$
The remaining $2^{n-2}$ extreme points of $\tilde{F}_i$ have the form,
\[
(
x_1 \,,
\ldots 
,
x_{i-1} \,,
0 ,
x_{i+1} \,,
\dots ,
x_{n-1} \,,
b_n \,,
0
)^\top ,
\]
and the convex hull of these points projected onto the $x_i=0$, $x_n = b_n$\,, and $y=0$ planes is an identical box.  Therefore, $\tilde{F}_i$ is a prism with $(n-1)$-volume
\[
\vol{n\!-\!1}{\tilde{F}_i} = (b_n - a_n) \prod_{j\in N_{in}} b_j\,. 
\]

We now compute the $n$-volume of $F_i = \conv(\tilde{F}_i \cup \{v_1\})$:
\begin{align*}
\vol{n}{F_i} &= \frac{\text{base} \times \text{height}}{\text{dimension}}  \\
&= \frac{(b_n - a_n) 
\prod_{j\in N_{in}}
b_j \cdot \sqrt{b_i^2 + a_n^2 \prod_{j\in N_n} b_j^2}}{n} \\
&= \frac{(b_n - a_n) 
\prod_{j\in N_{in}}
b_j \cdot \sqrt{b_i^2 \left(1 + {a_n^2 
\prod_{j\in N_{in}}
b_j^2}\right)}}{n} \\
&= \frac{(b_n - a_n) \cdot \prod_{j\in N_n} b_j \cdot \sqrt{1 + {a_n^2 
\prod_{j\in N_{in}} 
b_j^2}}}{n}.
\end{align*}

We now have the volume of the base of 
$\conv(F_i \cup \{v_2\}),$
and calculate the height of the apex, i.e., $v_2$\,, above this base, $F_i$\,.  This is the distance from the point $v_2$ to the hyperplane \begin{equation*}
x_i = \frac{1}{a_n\prod_{j\in N_{in}} b_j} y,
\end{equation*}
and is calculated as follows:

\begin{align*}
\frac{\left| b_i - \frac{
\prod_{j\in N} 
b_j}{a_n
\prod_{j\in N_{in}}
b_j} \right|}{\sqrt{1^2 + \frac{1}{a_n
\prod_{j\in N_{in}}
b_j^2}}} 
&= \frac{-b_i + b_i\cdot \frac{b_n}{a_n}}{\sqrt{\frac{a_n^2 
\prod_{j\in N_{in}}
b_j^2 + 1}{a_n^2 
\prod_{j\in N_{in}}
b_j^2}}} 
&= \frac{a_n 
\left(\prod_{j\in N_{in}}
b_j\right) \left (b_i\left(\frac{b_n}{a_n} - 1\right) \right )}{\sqrt{a_n^2
\prod_{j\in N_{in}}
b_j^2 + 1}}
&=\frac{(b_n-a_n) \cdot 
\prod_{j\in N_n} 
b_j}{\sqrt{a_n^2 
\prod_{j\in N_{in}}
b_j^2 + 1}}.
\end{align*}
Using this, we calculate the $(n+1)$-volume of $\text{conv}(F_i \cup \{v_2\})$ to be
\begin{equation*}
\frac{(b_n - a_n)^2 \cdot \prod_{j\in N_n} b_j^2}{n(n+1)}.
\end{equation*}

Note that this volume does {\bf not} depend on $i$.  Therefore, each of the $n-1$ facets defined by \eqref{eq:Pnfacet1s} will produce the same quantity.  In other words, 
$
\vol{n\!+\!1}{\conv (F_i \cup \{v_2\})},
$
is the same for $i \in N_n$\,, and 
\[
\sum_{i\in N_n}\vol{n\!+\!1}{\conv (F_i \cup \{v_2\})} = \frac{(n-1)(b_n - a_n)^2 \cdot \prod_{j\in N_n} b_j^2}{n(n+1)}.
\]

We now consider the final violated facet, denoted $F$, defined by the inequality
\begin{equation}
b_n - \left (x_n + \frac{b_n-a_n}{a_n \prod_{j\in N_n} b_j} y \right ) \geq 0 \tag{\ref{eq:Pnfacet4s}}.
\end{equation}
Recall that for all but one extreme point of $Q_{n+1}$, we have y = 0. Points of this form lie on $F$ if and only if $x_n = b_n$\,.  Moreover, by substituting the final
extreme point, $v_1$\,, into \eqref{eq:Pnfacet4s}, we see that $v_1$ also lies on $F$.
 
We wish to calculate the $(n+1)$-volume of $\conv(F \cup \{v_2\}),$
but first we compute the $n$-volume of $F$.

Note that $v_1$ is the only extreme point on $F$ with $x_n \neq b_n$\,, i.e., the remaining extreme points lie in the hyperplane $x_n = b_n$\,.  Moreover, we have already seen that these points also lie in the hyperplane $y = 0$. Therefore, similarly to before, $F$ itself is a pyramid with $v_1$ being the apex, and the convex hull of the remaining points the base.

Define $\tilde{F}$ to be the base of pyramid $F$.
$\tilde{F}$ is $(n-1)$-dimensional; more specifically, we observe that it is a box with one corner “cut off”.  In particular, $\tilde{F}$ is the convex hull of extreme points with the form,
$
(
x_1 \,,
\ldots ,
x_{n-1} \,,
b_n \,,
0
)^\top,
$
that have {\it at least} one $x_i = 0$ for $i \in N_n$\,.
Hence, the $(n-1)$-volume of $\tilde{F}$ is: \begin{equation*}\vol{n\!-\!1}{\tilde{F}} = \frac{(n-1)!-1}{(n-1)!} \cdot \prod_{i\in N_n} b_i\,. \end{equation*}

We now compute the $n$-volume of $F = \conv(\tilde{F} \cup \{v_1\})$:
\begin{align*}
\vol{n}{F} &= \frac{\text{base} \times \text{height}}{\text{dimension}}  \\
&= \frac{\frac{(n-1)! - 1}{(n-1)!} 
\prod_{j\in N_{n}}
b_j \cdot \sqrt{(b_n-a_n)^2 + a_n^2 \prod_{j\in N_n} b_j^2}}{n} \\
&= \frac{(n-1)! - 1}{n!} 
\prod_{j\in N_{n}}
b_j \cdot \sqrt{(b_n-a_n)^2 + a_n^2 \prod_{j\in N_n} b_j^2}\,.
\end{align*}

We now have the volume of the base of 
$\conv(F \cup \{v_2\}),$
and calculate the height of the apex, i.e, $v_2$\,, above this base, $F$. This is the distance from the point $v_2$ to the hyperplane,
\[
x_n + \frac{b_n-a_n}{a_n \prod_{j\in N_n}b_j } y = b_n\,,
\]
and is calculated as follows:

\begin{align*}
&\frac{\left| b_n + \frac{(b_n-a_n)}{a_n
\prod_{j\in N_n}
b_j} \cdot 
\prod_{j\in N} 
b_j - b_n \right|}{\sqrt{1 + \left (\frac{b_n-a_n}{a_n
\prod_{j\in N_n}
b_j} \right )^2}}
= \frac{(b_n-a_n) \cdot \frac{b_n}{a_n}}{\sqrt{\frac{a_n^2 
\prod_{j\in N_n} 
b_j^2 + (b_n-a_n)^2}{a_n^2 
\prod_{j\in N_n} 
b_j^2}}} \\
&\qquad= \frac{a_n 
\left(\prod_{j\in N_n} 
b_j\right) \cdot (b_n-a_n) \cdot \frac{b_n}{a_n} }{\sqrt{a_n^2 
\prod_{j\in N_n} 
b_j^2 + (b_n-a_n)^2}}
= \frac{(b_n-a_n) \cdot 
\prod_{j\in N}
b_j }{\sqrt{a_n^2 
\prod_{j\in N_n} 
b_j^2 + (b_n-a_n)^2}}.
\end{align*}
Using this distance, we calculate the $(n+1)$-volume of $\text{conv}(F \cup \{v_2\})$ to be
\begin{equation*}
\frac{(n-1)! - 1}{n!(n+1)} \cdot b_n(b_n-a_n) \cdot \prod_{j\in N_n} b_j^2\,.
\end{equation*}
Finally we sum the appropriate quantities to calculate the volume of $C_n^1$\,:

\begin{align*}
&\vol{n\!+\!1}{C_n^1} = \vol{n\!+\!1}{Q_{n+1}} + 
\sum_{i\in N_n}
\vol{n\!+\!1}{\conv (F_i \cup \{v_2\})} + \vol{n\!+\!1}{\conv (F \cup \{v_2\})} \\
&\quad= \frac{(n-1)! - 1}{(n+1)(n-1)!} \cdot a_n(b_n-a_n) \cdot 
\prod_{i\in N_n} 
b_i^2 + \frac{(n-1)(b_n-a_n)^2 \cdot 
\prod_{j\in N_n} 
b_j^2}{n(n+1)}  \\
&\qquad \qquad \qquad \qquad \qquad \qquad + \frac{(n-1)! - 1}{(n+1)n(n-1)!} \cdot b_n(b_n-a_n) \cdot 
\prod_{j\in N_n}
b_j^2 \\
&\quad= \frac{(b_n-a_n) \cdot 
\prod_{j\in N_n} 
b_j^2 }{(n+1)!} \cdot \bigg ( n\left((n-1)! - 1\right)a_n + (n-1)(n-1)!(b_n-a_n) + b_n\left((n-1)! - 1\right) \bigg)\\
&\quad= \frac{(b_n-a_n) 
\prod_{i\in N_n}
b_i^2}{(n+1)!} \;\bigg ( (n! - 1)b_n + \big ((n-1)! - n \big )a_n \bigg ).
\end{align*}

\end{proof}


\section{Outlook}\label{sec:outlook}

An interesting next step would be 
to give a complete inequality description and volume formula 
for 
$C^2_n$\,, which would generalize what we presented above
for $C^1_n$ 
and would also include the tetrahedron $C^2_2$\,.
Of course, it would be nice to treat 
 $C^3_n$ as well, 
but that would carry all the complexity of
$C^3_3$\,, which is already quite involved
(see \cite{SpeakmanLee2015,SpeakAkerov2019}).
Other directions are to generalize what we did for 
 $C^1_n$ to mixed-sign domains (see \cite{MakhoulSpeakman2025} for 
 the general case of $n=3$), and to analyze
 ``double-McCormick relaxations'' of $C^1_n$ 
 and compare them (via volume) with $C^1_n$
 (see \cite{SpeakmanLee2015} for the case of $C^3_3$).
 

\backmatter

\bmhead{Acknowledgements}
This work was begun at the Workshop on Circuit Diameters and Augmentation: Recent Advances in Linear and Integer Optimization; sponsored under NSF grant 2006183 (PI: S. Borgwardt).  J. Lee was supported in part by U.S.A. Office of Naval Research grant N00014-24-1-2694. Some use was made of a Claude LLM to refine and streamline the presentation of our proof of Theorem \ref{thm:facets}.

\bmhead{Data Availability Statement}
Data sharing not applicable to this article as no datasets were generated or analyzed during the current study.

\bmhead{Disclaimer} The views expressed in this article are those of the authors and do not reflect the official policy or position of the U.S. Naval Academy, Department of the Navy, the Department of Defense, or the U.S. Government.

\bibliography{mv-journal} 

@article{LSS_STOGO,
title={On the convex hull of the graph of a simple monomial, {STOGO 2025, Proceedings of the XVI Workshop on Global Optimization}},
author="Lee, Jon
and Skipper, Daphne
and Speakman, Emily",
year={2025},
editors={Johanna Skåntorp and Jan Kronqvist},
pages={167--171},
}

@article{SpeakAkerov2019,
title = "Computing the volume of the convex hull of the graph of a trilinear monomial using mixed volumes",
journal = "Discrete Appllied Mathematics",
year = "2022",
volume="308",
pages="36--45",
author = {Emily Speakman and Gennadiy Averkov},
note={\url{https://doi.org/10.1016/j.dam.2019.09.007}},
}

@PHDTHESIS{SpeakmanThesis,
  author =       {Emily Speakman},
  title =        {Volumetric Guidance for Handling Triple Products in Spatial Branch-and-Bound},
  school =       {University of Michigan},
  year =         {2017},
  type =         {{Ph.D. Dissertation}},
  month =        {April},
note={\url{https://deepblue.lib.umich.edu/items/17bbe5ef-d264-4575-beb9-9581d4b239f7}},
}

@article {SpeakmanLee2015,
    AUTHOR = {Speakman, Emily and Lee, Jon},
     TITLE = {Quantifying double {M}c{C}ormick},
  JOURNAL = {Mathematics of Operations Research},
    VOLUME = {42},
      YEAR = {2017},
    NUMBER = {4},
     PAGES = {1230--1253},
note={\url{https://doi.org/10.1287/moor.2017.0846}},
}

@article{LM1994,
    AUTHOR = {Lee, Jon and Morris, Jr., Walter D.},
     TITLE = {Geometric comparison of combinatorial polytopes},
JOURNAL = {Discrete Applied Mathematics},
    VOLUME = {55},
      YEAR = {1994},
     PAGES = {163--182},
     note={\url{https://doi.org/10.1016/0166-218X(94)90006-X}},
}

@article{KLS1197,
title = {The volume of relaxed Boolean-quadric and cut polytopes},
journal = {Discrete Mathematics},
volume = {163},
number = {1},
pages = {293--298},
year = {1997},
author = {Chun-Wa Ko and Jon Lee and Einar Steingrímsson},
note={\url{https://doi.org/10.1016/0012-365X(95)00343-U}},
}

@book{zieg,
  address = {New York},
  author = {Ziegler, Günter M.},
  publisher = {Springer-Verlag},
  title = {Lectures on Polytopes. Graduate Texts in Mathematics, 152},
  note={\url{https://doi.org/10.1007/978-1-4613-8431-1}},
  year = 1995,
}

@article{Steingrimsson1994,
author = {Steingrímsson, E.},
journal = {Discrete \& Computational Geometry},
number = {4},
pages = {465--479},
title = {A decomposition of 2-weak vertex-packing polytopes},
note={\url{https://doi.org/10.1007/BF02574393}},
volume = {12},
year = {1994},
}

@misc{MakhoulSpeakman2025,
      title={Volume formulae for the convex hull of the graph of a trilinear monomial: A complete characterization for general box domains}, 
      author={Lillian Makhoul and Emily Speakman},
      year={2025},
note={\url{https://arxiv.org/abs/2512.13964}}, 
}

@article{quad,
author = {Cafieri, Sonia and Lee, Jon and Liberti, Leo},
title = {On convex relaxations of quadrilinear terms},
year = {2010},
volume = {47},
number = {4},
note= {\url{https://doi.org/10.1007/s10898-009-9484-1}},
journal = {Journal of Global Optimization},
pages = {661--685},
}

@article{SMITH1999457,
title = {A symbolic reformulation/spatial branch-and-bound algorithm for the global optimisation of nonconvex MINLPs},
journal = {Computers \& Chemical Engineering},
volume = {23},
number = {4},
pages = {457--478},
year = {1999},
note={\url{https://doi.org/10.1016/S0098-1354(98)00286-5}},
author = {E.M.B. Smith and C.C. Pantelides},
}

@Article{LeeSkipperSpeakmanMPB2018,
author="Lee, Jon
and Skipper, Daphne
and Speakman, Emily",
 volume = {170},
 pages = {121--140},
title="Algorithmic and modeling insights via volumetric comparison of polyhedral relaxations",
journal="Mathematical Programming",
year="2018",
note="\url{https://doi.org/10.1007/s10107-018-1272-6}",
}

@Incollection{SpeakmanYuLee,
author="Speakman, Emily and Yu, Han and Lee, Jon",
editor="Salvagnin, D. and Lombardi, M.",
title="Experimental validation of volume-based comparison for double-{M}c{C}ormick relaxations",
booktitle="CPAIOR 2017",
year="2017",
publisher="Springer",
address="Cham",
pages="229--243",
note={\url{https://doi.org/10.1007/978-3-319-59776-8_19}},
}

@Article{SpeakmanLee_Branching,
author="Speakman, Emily and Lee, Jon",
title="On branching-point selection for trilinear monomials in spatial branch-and-bound: the hull relaxation",
journal="Journal of Global Optimization",
volume = {72},
 pages = {129--153},
year="2018",
note = {\url{https://doi.org/10.1007/s10898-018-0620-7}},
}

@article{LeeSkipper2017,
 author    = {Lee, Jon and Skipper, Daphne},
 title     = "Volume computation for sparse boolean quadric relaxations",
JOURNAL = {Discrete Applied Mathematics},
volume = {275},
pages = {79--94},
year = {2020},
note={\url{https://doi.org/10.1016/j.dam.2018.10.038}},
}

@article{McCormick76,
  title={Computability of global solutions to factorable nonconvex programs: Part {I} -— Convex underestimating problems},
  author={McCormick, Garth P},
  journal={Mathematical Programming, Series B},
  volume={10},
  number={1},
  pages={147--175},
  year={1976},
  publisher={Springer},
note = {\url{https://doi.org/10.1007/BF01580665}},

}

@article{rikun1997,
  title={A convex envelope formula for multilinear functions},
  author={Rikun, Anatoliy D.},
  journal={Journal of Global Optimization},
  volume={10},
  pages={425--437},
  year={1997},
note = {\url{https://doi.org/10.1023/A:1008217604285}}
}

@article{meyer2004trilinear,
  title={Trilinear monomials with mixed sign domains: Facets of the convex and concave envelopes},
  author={Meyer, Clifford and Floudas, Christodoulos},
  journal={Journal of Global Optimization},
  volume={29},
  pages={125--155},
  year={2004},
  publisher={Springer},
note = {\url{https://doi.org/10.1023/B:JOGO.0000042112.72379.e6}}
}

@article{meyer04mixed,
  title={Trilinear monomials with mixed sign domains: Facets of the convex and concave envelopes},
  author={Meyer, Clifford A. and Floudas, Christodoulos A.},
  journal={Journal of Global Optimization},
  volume={29},
  pages={125--155},
  year={2004},
  publisher={Springer},
  doi = {10.1023/B:JOGO.0000042112.72379.e6}
}

\end{document}